% Eduardo Fonseca Mendes
% eduardo.mendes@northwestern.edu
% 
\documentclass[12pt]{article}

\usepackage{amsmath,amsfonts}
\usepackage[authoryear]{natbib}
\RequirePackage[colorlinks,citecolor=blue,urlcolor=blue]{hyperref}
% use this package if hyperref and natbib is used:
\RequirePackage{hypernat}

\usepackage{amsthm}
\usepackage{fullpage}
% ------------------------------------- NeCo begin ------------------------------------
\usepackage{times}
\usepackage{graphicx}
\usepackage{color}
\usepackage{multirow}
\usepackage{rotating}
\usepackage{latexsym}
%\DeclareGraphicsExtensions{.eps,.png}

%%% margins 
\textheight 23.4cm
\textwidth 14.65cm
\oddsidemargin 0.375in
\evensidemargin 0.375in
\topmargin  -0.55in
\interfootnotelinepenalty=10000
%

% Different font in captions
\newcommand{\captionfonts}{\normalsize}

\makeatletter  
\long\def\@makecaption#1#2{%
  \vskip\abovecaptionskip
  \sbox\@tempboxa{{\captionfonts #1: #2}}%
  \ifdim \wd\@tempboxa >\hsize
    {\captionfonts #1: #2\par}
  \else
    \hbox to\hsize{\hfil\box\@tempboxa\hfil}%
  \fi
  \vskip\belowcaptionskip}
\makeatother   
%%%%%

% -------------------------------------- NeCo end ----------------------------------------

% provide arXiv number if available:
%\arxiv{math.ST/1110.2058v1}

%some useful definitions
\newcommand{\B}{\mathcal{B}}
\newcommand{\R}{\mathbb{R}}

\newcommand{\E}{\mathbb{E}}
\newcommand{\D}{\mathcal{D}}
\newcommand{\diam}{\mathrm{diam}}
\newcommand{\var}{\mathrm{Var}}

\newcommand{\W}{\mathcal{W}}
\newcommand{\WK}{\mathcal{W}^\infty_{\alpha,K_0}}
\newcommand{\F}{\mathcal{F}}
\newcommand{\bs}{\boldsymbol}
\newcommand{\ess}{\mathrm{ess}}

%theorem
\newtheorem{theorem}{Theorem}[section]
\newtheorem{definition}{Definition}[section]
\newtheorem{assumption}{Assumption}
\newtheorem{lemma}{Lemma}[section]

\newtheorem{corollary}{Corollary}[section]
\newtheorem{proposition}{Proposition}[section]
\newtheorem{remark}{Remark}

% Front Matter
% "Title of the paper"
\title{Convergence Rates for Mixture-of-Experts}
\author{Eduardo F. Mendes \and Wenxin Jiang\\{ Department of Statistics, Northwestern University}}
%\email{eduardo.mendes@northwestern.edu}

\begin{document}
\maketitle
\begin{abstract}
In mixtures-of-experts (ME) model, where a number of submodels (experts) are combined, there have been two longstanding problems: (i) how many experts should be chosen, given the size of the training data? (ii) given the total number of parameters, is it better to use a few very complex experts, or is it better to combine many  simple experts? In this paper, we try to provide some insights to these problems through a theoretic study on a ME structure where $m$  experts  are mixed, with each expert being related to a polynomial regression model of order $k$.  We study the  convergence rate of the maximum likelihood estimator (MLE), in terms of how fast the Kullback-Leibler divergence of the estimated density converges to the true density, when the sample size $n$ increases.  The convergence rate  is found to be dependent on both $m$ and $k$, and certain choices of $m$ and $k$ are found to produce optimal convergence rates.  Therefore, these results shed light on the two aforementioned important problems: on how to choose $m$, and on how $m$ and $k$ should be compromised, for achieving good convergence rates.
%	In this paper we study the mixture-of-experts (ME) model with experts in an one-exponential family with mean $\varphi(h_k)$, where $h_k$ is a $k^{th}$ order polynomial and $\varphi(\cdot)$ is the inverse link function. We derive sharp approximation rates with respect to the Kullback-Leibler divergence and convergence rate of the maximum likelihood estimator to densities in an one-parameter exponential family with mean $\varphi(h)$ where $h\in\WK$, a Sobolev class with $\alpha$ derivatives. We find that the convergence rate of the maximum likelihood estimator to the true density is $O_p(m^{-2[\alpha\wedge(k+1)]/s}+ (mJ_k+v_m) n^{-1}\log n)$, where $n$ is the number of observations, $s$ is the number of covariates, $J_k$ is the number of parameters of the polynomial $h_k$, $m$ the number of experts and $v_m$ is the number of parameters on the weight functions. Further, if the maximum likelihood estimator is uniquely identified we can remove the ``$\log n$'' term of the convergence rates. We close the paper discussing model specification and the effects on approximation and estimation errors and conclude that the best error bound is achieved using a balance between $k$ and $m$. We also explain how the results in this paper can be extended to more general approximation and target classes of densities.
\end{abstract}

%\begin{keyword}[class=AMS]
%\kwd[Primary ]{62G07}
%\kwd[; secondary ]{62G20}
%\kwd{41A25}
%\end{keyword}

%\begin{keyword}	
%\kwd{
{\small\noindent\textbf{Keywords:} Convergence Rate, Approximation Rate, Nonparametric Regression, Exponential Family, Hierarchical Mixture-of-Experts, Mixture-of-Experts, Maximum Likelihood estimation}
%\end{keyword}

\section{Introduction}

Mixture-of-experts models (ME) \citep{jacobsetal1991} and  hierarchical mixture-of-experts models (HME) \citep{jordanjacobs1994} are powerful tools for estimating the density of a random variable $Y$ conditional on a known set of covariates $X$. The idea is to ``divide-and-conquer''. We first divide the covariate space into \textit{subspaces}, then approximate each \textit{subspace} by an adequate model and, finally, weigh by the probability that $X$ falls in each \textit{subspace}. Additionally, it can be seen as a generalization of the classical mixture-of-models, whose weights are constant across the covariate space. Mixture-of-experts have been widely used on a variety of fields including image recognition and classification, medicine, audio classification and finance. Such flexibility have also inspired a series of distinct models including \citet{woodetal2002}, \citet{carvalho2005a}, \citet{gewekekeene2007}, \citet{woodetal2008}, \citet{villanietal2009}, \citet{younghunter2010} and \citet{woodetal2011}, among many others. 

We consider a framework similar to \citet{jiangtanner1999a} among others. Assume each expert is in a one-parameter exponential family with mean $\varphi(h_k)$, where $h_k$ is a $k^{th}$-degree polynomial on the conditioning variables $X$ (hence a linear function of the parameters) and $\varphi(\cdot)$ is the inverse link function. In other words, each expert is a Generalized Linear Model on an one-dimensional exponential family (GLM1). We allow the target density to be in the same family of distributions, but with conditional mean $\varphi(h)$ with $h\in \WK$, a Sobolev class with $\alpha$ derivatives. Some examples of target densities include the Poisson, binomial, Bernoulli and exponential distributions with unknown mean. Normal, gamma and beta distributions also fall in this class if the dispersion parameter is known. 

One might be reluctant to use (H)ME models with polynomial experts since it leads to more and more complex models as the degree $k$ of the polynomials increases. The discussion whether is better to mixture many simple models or fewer more complex models is not new in the literature of mixture-of-experts. Earlier in the literature, \citet{jacobsetal1991} and \citet{pengetal1996} proposed mixtures of many simple models; more recently, \citet{woodetal2002} and \citet{villanietal2009} considered using only a few complex models. \citet{celeuxetal2000} and \citet{geweke2007} advocate for mixing fewer complex models, claiming that mixture models can be very difficult to estimate and interpret. We justify the use of such models through the approximation and estimation errors. We illustrate that might be a gain in a small increase of $k$ compared to the linear model $k=1$ but the number of parameters increases exponentially as $k$ increases. Therefore, a balance between the complexity of the model and the number of experts is required for achieving better error bounds.

This work extends \citet{jiangtanner1999a} in few directions. We show that, by including polynomial terms, one is able to improve the approximation rate on sufficiently smooth classes. This rate is sharp for the piecewise polynomial approximation as shown in \citet{windlund1977}. Moreover, we contribute to the literature by providing rates of convergence of the maximum likelihood estimator to the true density. We emphasize that such rates have never been developed for this class of models and the method used can be straightforwardly generalized to more general classes of mixture of experts. Convergence of the estimated density function to the true density and parametric convergence of the quasi-maximum likelihood estimator to the pseudo-true parameter vector are also obtained.

We found that, under slightly weaker conditions than \citet{jiangtanner1999a}, the approximation rate in Kullback-Leibler divergence is uniformly bounded by $c\times m^{-2[\alpha\wedge (k+1)]/s}$, where $c$ is some constant not depending on $k$ or $m$, and $s$ the number of independent variables. This is a generalization of the rate found in \citet{jiangtanner1999a} who assume $\alpha=2$ and $k=1$. The convergence rate of the maximum likelihood estimator to the true density is $O_p\left(m^{-2[\alpha\wedge (k+1)]/s}+(mJ_k+v_m)n^{-1}\log n\right)$, where $J_k$ is the total number of parameters in each polynomial (typically ${k+s}\mbox{ choose }{k}$), and $v_m$ is the number of parameters in the weight functions. To show the previous results we do not assume identifiability of the model as it is natural for mixture-of-experts to be unidentifiable under permutation of the experts. If we further assume identifiability \citep{jiangtanner1999a,mendesetal2006}, and that the likelihood function has a unique maximizer, we are able to remove the ``$\log n$'' term in the convergence rate. Optimal nonparametric rates of convergence can be attained if $k = \alpha-1$ and $m=O\left(n^{s/(2\alpha+s)}\right)$ \citep{stone1980, stone1985, chen2006}.

\citet{zeevietal1998} show approximation in the $L^p$ norm and estimation error for the conditional expectation of the ME with generalized linear experts.  \citet{jiangtanner1999a} show consistency and approximation rates for the HME with generalized linear model as experts and a general specification for the gating functions. They consider the target density to belong to the exponential family with one parameter. Their approximation rate of the Kullback-Leibler divergence between the target density and the model is $O(1/m^{4/s})$, where $m$ the number of experts and $s$ the number of covariates. \citet{norets2009} show the approximation rate for the mixture of Gaussian experts where both the variance and the mean can be nonlinear and the weights are given by multinomial logistic functions. He considers the target density to be a smooth continuous function and the dependent variable $Y$ to be continuous and satisfy some moment conditions. His approximation rate is $O(1/m^{s+2+1/(q-2)+\varepsilon})$, where $Y$ is assumed to have at least $q$ moments and $\varepsilon$ is a small number. Despite these findings, there are no convergence rates yet for the maximum likelihood estimator of mixture-of-experts type of models in the literature.

By studying the convergence rates in this paper, we will be able to shed light on two long-standing problems in ME: \textit{(i) How to choose the number of experts $m$ for a given sample size $n$? (ii) Is it better to mix many simple experts or to mix a few complex experts?} None of the works discussed above directly address these questions. Our study of a ME structure mixing $m$ of the $k$th order polynomial submodels is particularly useful in studying problem (i), which cannot be studied in the framework of \citep{jiangtanner1999a}, for example, who have restricted to the special case $k=1$.

Throughout the paper we use the following notation. Let $x=(x_1,\dots,x_s)\in S$ and $h(x):S\rightarrow \R$ an $\lambda$ denote some measure. For any finite vector $x$ we use $|x| = \sum_{j=1}^s |x_j|$ and $|x|_p = \left( \sum_{j=1}^p|x_j|^p \right)^{1/p}$, for $p\in[1,\infty)$, if $p=\infty$ we take $|x|_\infty = \sup_{j=1,\dots,s}|x_j|$. For some function $h(x)$ and measure $\lambda$ we denote $\|h\|_{p,S} = \left( \int_S|h|d\lambda \right)^{1/p}$, for $p\in[1,\infty)$, and for $p=\infty$ we have $\|h\|_{\infty,S}=\ess\sup_{x\in S}|h(x)|$. 

The remainder of the paper is organized as follows. In the next section we introduce the target density and mixture of experts models. We also demonstrate that the quasi-maximum likelihood estimator converges to the pseudo-true parameter vector. Section \ref{s:main} establishes the main results of the paper: approximation rate, convergence rate and non-parametric consistency. Section \ref{s:discussion} discusses model specification and the tradeoff that we unveil between the number of experts and the degree of the polynomials. In the concluding remarks we compare our results with \citet{jiangtanner1999a} and provide direction for future research. The appendix collects technical details of the paper and a deeper treatment on how to bound the estimation error.

\section{Preliminaries}

In this section we introduce the target class of density, mixture-of-experts model with GLM1 experts and the estimation algorithm.

\subsection{Target density}\label{s:target}
Consider a sequence of random vectors $\{(X_i',Y_i)'\}_{i=1}^n$ defined on $( (\Omega \times A)^n,\B_{ (\Omega \times A)^n},P^n_{xy})$ where $X\in\Omega\subset\R^s$, $Y\in A\subseteq \R$ and $\B_{S}$ is the Borel $\sigma$-algebra generated by the set $S$. We assume that $P_{xy}$ has a density $p_{xy} = p_{y|x}p_x$ with respect to some measure $\lambda$. More precisely, we assume that  $p_x$ is known and $p_{y|x}$ is member of an one-dimensional exponential family, i.e.
\begin{equation}
	p_{y|x} = \exp\left\{ ya(h(x)) + b(h(x)) + c(y) \right\},
	\label{eq:targetdensity}
\end{equation}
where $a(\cdot)$ and $b(\cdot)$ are known three times continuously differentiable functions, with first derivative bounded away from zero and $a(\cdot)$ has a non-negative second derivative; $c(\cdot)$ is a known measurable function of $Y$. The function $h(\cdot)$ is a member of $\WK(\Omega)$, a Sobolev class of order $\alpha$\footnote{Suppose $1\le p \le \infty$ and $\alpha>0$ is an integer. We define $\W^\alpha_{p,K_0}(\Omega)$ as the collection of measurable functions $h$ with all distributional derivatives $D^r f$, $|r|\le \alpha$, on $L^p(\Omega)$, i.e. $\|D^rh\|_{p,\Omega}\le K_0$. Here $D^r = \partial^{|r|}/(\partial^{r_1}x_1\dots\partial^{r_s}x_s)$ and $|r| = r_1 + \dots + r_s$ for $r=(r_1, \ldots,r_s)$.}. Throughout the paper denote by $\Pi(\WK)$ the class of density functions $p_{xy}=p_{y|x}p_x$.
	
The one-parameter exponential family of distributions includes the Bernoulli, exponential, Poisson and binomial distributions, it also includes the Gaussian, gamma and Weibull distributions if the dispersion parameter is known. It is possible to extend the results to the case where the dispersion parameter is unknown, but defined in a compact subset bounded away from zero. In this work we focus only in the one-parameter case.

Some properties of the one parameter exponential family are : (i) conditional on $X=x$, the moment generating function of $y$ exists in a neighborhood of the origin implying that moments of all orders exist; (ii) for each positive integer $j$, $\mu_{(j)}(h)=\int_Ay^j\exp[a(h)y+b(h)+c(y)]d\lambda$ is a differentiable function of $h$; and (iii) the first conditional moment $\mu_{(1)}(h) = -\dot{b}(h)/\dot{a}(h) = \varphi(h)$, where $\dot{a}(h)$ and $\dot{b}(h)$ are the first derivatives of $a(h)$ and $b(h)$ respectively, and $\varphi(\cdot)$ is called the inverse link function. See \citet{lehmann1991} and \citet{glm} for more results about the exponential family of distributions.

\subsection{Mixture-of-experts model} \label{s:model}
The mixture-of-experts model with GLM1 experts is defined as:
\begin{align}
	f_{m,k}(x,y;\zeta) &= \sum_{j=1}^m g_j(x;\nu)\,\pi(h_k(x,\theta_j),y)\cdot p_x\nonumber\\
	\quad &= \sum_{j=1}^m g_j(x;\nu)\,\exp\{ya(h_k(x;\theta_j)) + b(h_k(x;\theta_j)) + c(y)\}\cdot p_x,
	\label{eq:approxclass}
\end{align}
where the functions $g_j \ge 0$ and $\sum_{j=1}^mg_j = 1$ with parameters $\nu\in V_m\subset\ell^2(\R^{v_m})$\footnote{We denote $\ell^2(\R^k) \equiv \{x\in\R^k:\,\sum_{j=1}^kx_j^2 <\infty\}$. }, $v_m$ denoting the dimension of $\nu$. The functions $h_k(x;\theta_j)$ are $k^{th}$-degree polynomials on $\Omega$ with parameter vector $\theta_j \in\Theta_k \subset\ell^2(\R^{J_k})$, $J_k$ denoting the dimension of $\theta_j$; write the vector of parameters of all experts as $\theta = (\theta_1',\dots,\theta_m)'$ defined on $\Theta_{mk} \equiv \Theta_k^m$. The parameter vector of the model is $\zeta = (\nu',\theta')'$ and is defined on $V_m\times\Theta_{mk}$, a subset of $\R^{v_m\times mJ_k}$. Throughout the paper we denote by $\F_{m,k}$ the class of (approximant) densities $f_{m,k}$.

To derive consistency and convergence rates, one need to impose some restrictions on the functions $\pi$ and $g_j$ to avoid abnormal cases. This condition is not restrictive and is satisfied by the multinomial logistic weight functions ($g's$) and the Bernoulli, binomial, Poisson and exponential experts, among many other classes of distributions and weight functions.
\begin{assumption}
	There exist functions $c_g(x) = (c_g^{(1)}(x),\dots,c_g^{(v_m)}(x))'$ and $F(x,y) = (F^{(1)}(x,y),$ $\dots,F^{(J_k)}(x,y))'$ with $\E[c_g(X)'c_g(X)] < \infty$ and $\E[F(X,Y)'F(X,Y)]<\infty$, such that	the vector-function $g(x;\nu) = (g_1(x;\nu),\dots,g_m(x;\nu))'$ satisfy
	\[
	\sup_{\nu\in V_m}\frac{\partial \log {g(x;\nu)}}{\partial\nu_i}\leq c^{(i)}_g(x);
	\]
	and each expert $\pi(h_k(x;\theta_j),y)$ satisfy
	\[
	\sup_{\theta\in\Theta_{k}}\frac{\partial\pi(h_k(x,\theta_{j}),y)}{\partial\theta_{ji}}\leq F^{(i)}(x,y), \quad\mbox{ for each }1\leq j\leq m.
	\] 
	
	\label{a:bound_df}
\end{assumption}

\subsection{Maximum likelihood estimation and the EM algorithm}\label{mle}

\subsubsection{Maximum likelihood estimation}
We consider the maximum likelihood method of estimation. We want to find the parameter vector $\hat{\zeta}_n = (\hat{\nu}_n',\hat{\theta}_n')'$ that maximizes
\begin{equation}
	L_n(\zeta) = n^{-1}\sum_{i=1}^n \log\left\{ f_{m,k}(X_i,Y_i;\zeta)/\varphi_0(X_i,Y_i) \right\},
	\label{eq:lkl}
\end{equation}
where $\varphi_0(X,Y) = \exp(c(Y))p_x(X)$. That is, 
\begin{equation}
	\hat{\zeta}_n = \arg\max_{\zeta\in V_m\times\Theta_{mk}}L_n(\zeta).
	\label{eq:mle}
\end{equation}
The maximum likelihood estimator is not necessarily unique. In general, mixture-of-experts models are not identifiable under permutation of the experts. To circumvent this issue one must impose restrictions on the experts and the weighting (or the parameter vector of the model), as shown in \citet{jiangtanner1999c}.

Define the Kullback-Leibler (KL) divergence between $p_{xy}$ and $f_{m,k}$ as
\begin{equation}
	KL(p_{xy},f_{m,k}) = \int_\Omega\int_A \log\frac{p_{xy}}{f_{m,k}}dP_{y|x}dP_x.
   \label{eq:kl}
\end{equation} 
The log-likelihood function in (\ref{eq:lkl}) converges to its expectation with probability one as the number of observations increases. Therefore, in the limit, the minimizer $\hat{f}_{m,k}$ of \eqref{eq:lkl} (indexed by $\hat{\zeta}_n$) also minimizes the Kullback-Leibler divergence between the true density and the estimated density.

In this work only consider i.i.d. observations but is straightforward to extend the results to more general data generating processes. Next assumption formalizes it.

\begin{assumption}[Data Generating Process]
	The sequence $(X_i,Y_i)_{i=1}^n$, $n=1, 2, \dots$ is an independent and identically distributed sequence of random vectors with common distribution $P_{xy}$.
	\label{a:dgp}
\end{assumption}

Next results ensures the existence of such estimator. 
\begin{theorem}[Existence]
	For a sequence $\{(V_m\times\Theta_{mk})_n\}$ of compact subsets of $V_m\times\Theta_{mk}$, $n=1,2,\cdots$, there exists a $\B(\Omega\times A)-$measurable function $\hat\zeta_n:\Omega\times A \rightarrow (V_m\times\Theta_{mk})_n$, satisfying equation (\ref{eq:mle}) $P_{xy}$-almost surely.
	\label{thm:existence}
\end{theorem}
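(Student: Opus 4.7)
The plan is to view this as a routine application of measurable selection. The three tasks are: (i) show $L_n(\zeta)$ is jointly measurable in the sample and continuous in $\zeta$; (ii) use compactness to conclude pointwise existence of a maximizer; (iii) invoke a measurable maximum theorem to extract a measurable selector.

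First I would check that, for fixed data $((X_i,Y_i))_{i=1}^n$, the function $\zeta\mapsto L_n(\zeta)$ is continuous on $V_m\times\Theta_{mk}$. Inspecting \eqref{eq:approxclass}, each $h_k(x;\theta_j)$ is a polynomial whose coefficients enter linearly in $\theta_j$; the functions $a(\cdot),b(\cdot)$ are $C^3$ by the assumptions in Section~\ref{s:target}; and the weights $g_j(x;\nu)$ are continuously differentiable in $\nu$ (implicit from Assumption~\ref{a:bound_df}). Hence $f_{m,k}(x,y;\zeta)$ is continuous (indeed $C^1$) in $\zeta$, strictly positive as a weighted sum of positive exponentials, so $\log\{f_{m,k}/\varphi_0\}$ is continuous in $\zeta$, and averaging over the finite sample preserves continuity. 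I would also observe that for fixed $\zeta$, the map $(x,y)\mapsto L_n(\zeta)$ is Borel measurable (composition of the Borel-measurable data with the continuous integrand), so $L_n$ is a Carath\'eodory function of $(\omega,\zeta)$.

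Next, for each realization, Weierstrass's theorem applied on the compact set $(V_m\times\Theta_{mk})_n$ gives that the argmax correspondence
\[
\Psi_n(\omega)=\Bigl\{\zeta\in (V_m\times\Theta_{mk})_n:\ L_n(\zeta)(\omega)=\sup_{\zeta'\in (V_m\times\Theta_{mk})_n}L_n(\zeta')(\omega)\Bigr\}
\]
is nonempty and closed (hence compact). To produce the required measurable function $\hat\zeta_n$, I would invoke the measurable maximum theorem (e.g., Aliprantis and Border, \emph{Infinite Dimensional Analysis}, Theorem~18.19, or equivalently the Kuratowski--Ryll-Nardzewski selection theorem applied to $\Psi_n$): since $L_n$ is Carath\'eodory and the constraint set is a fixed compact metric space, $\Psi_n$ admits a Borel-measurable selector $\hat\zeta_n$, and this selector satisfies \eqref{eq:mle} $P_{xy}$-almost surely (in fact everywhere).

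I do not anticipate a real obstacle here; the result is a template application of measurable selection, and every hypothesis (continuity in $\zeta$, compactness, measurability in the data) has already been put in place in Sections~\ref{s:target}--\ref{s:model}. The only mildly delicate point is confirming that the Carath\'eodory property upgrades to joint measurability of $L_n$ so that the selection theorem applies; this is immediate because a Carath\'eodory function on the product of a measurable space and a separable metric space is jointly measurable. Alternatively, one could simply cite Jennrich (1969) or Pfanzagl (1969), who formalize exactly this existence-of-measurable-MLE construction.
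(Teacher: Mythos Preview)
Your proposal is correct and follows essentially the same route as the paper: verify that $L_n$ is a Carath\'eodory function (measurable in the data, continuous in $\zeta$) on a compact parameter set, then invoke a measurable-maximum/selection theorem. The only difference is the specific reference: the paper appeals to Theorem~2.12 in White's \emph{Estimation, Inference and Specification Analysis}, whereas you point to Aliprantis--Border, Kuratowski--Ryll-Nardzewski, or Jennrich (1969); these are equivalent formulations of the same existence result.
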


We demonstrate that under the classical assumptions, such as identifiability and unique maximizer, the maximum likelihood estimator $\hat{\zeta}$ consistently estimate the best model in the class $\mathcal{F}_{mk}$ indexed by $\zeta^*$, i.e. the maximum likelihood estimator $\hat\zeta$ converges almost surely to $\zeta^*$.  It can be shown that the convergence results also hold for the ergodic case if we assume that $(\log f_{m,k}(X_i,Y_i;\zeta))_{i=1}^n$ is ergodic. However, simpler conditions to ensure ergodicity of the likelihood function are not trivial and hence out of the scope of this paper. 

\begin{assumption}[Identifiability]
	For any distinct $\zeta_1$ and $\zeta_2$ in $V_m\times\Theta_{mk}$, for almost every $(x,y)\in \Omega\times A$,  
	\[
	f_{m,k}(x,y;\zeta_1) \neq f_{m,k}(x,y;\zeta_2)
	\]
	\label{a:ident}
\end{assumption}

 \citet{jiangtanner1999c} find sufficient conditions for identifiability of the parameter vector for the HME with one layer, while \citet{mendesetal2006} for a binary tree structure. Both cases can be adapted to more general specifications. Although one can show consistency to a set, we adopt a more traditional approach requiring identifiability of the parameter vector. 

\begin{assumption}[Unique Maximizer]
	Let $\zeta=(\nu',\theta')'$ and $\zeta^*$ the argument that maximizes $\E\log f_{m,k}$ over $V_m\times\Theta_{mk}$. Then
	\begin{equation}
		\det\left( \E\frac{\partial^2}{\partial \zeta\partial\zeta'}\log f_{m,k}\vert_{\zeta=\zeta^*} \right) \neq 0
		\label{eq:uniquemax}
	\end{equation}
	\label{a:uniquemax}
\end{assumption}

This assumption follows from a second order Taylor expansion of the expected likelihood around the parameter vector that maximizes \eqref{eq:kl}, denoted $\zeta^*$. We require the Hessian to be invertible at $\zeta^*$. The requirement for an identifiable unique maximizer is only technical in a sense that the objective function is not allowed to become too flat around the maximum (For more discussion on this topic see \citet{bateswhite1985}, pg 156, and \citet{qmle} chapter 3). A similar assumption was made in the series of papers from \citet{carvalho2005a, carvalho2005b, carvalho2006, carvalho2007} and \citet{zeevietal1998} and is an usual assumption in the estimation of misspecified models. 

\begin{theorem}[Parametric consistency of misspecified models]
	Under Assumptions \ref{a:bound_df}, \ref{a:dgp}, \ref{a:ident}, and \ref{a:uniquemax}, the maximum likelihood estimate $\hat\zeta \rightarrow \zeta^*$ as $n\rightarrow\infty$ $P_{xy}$-a.s.
	\label{thm:param_consistency}
\end{theorem}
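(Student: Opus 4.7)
The plan is to follow the classical Wald/White argument for consistency of extremum estimators: (i) establish a uniform strong law of large numbers (ULLN) for $L_n(\zeta)$ on the compact parameter space; (ii) show that $\zeta^*$ is a well-separated maximizer of the population criterion $L(\zeta) := \E[\log(f_{m,k}(X,Y;\zeta)/\varphi_0(X,Y))]$; and (iii) close with the standard argmax continuity argument. Theorem \ref{thm:existence} guarantees that the candidates $\hat\zeta_n$ lie in a compact set, which is needed throughout.

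For step (i), I would exploit Assumption \ref{a:bound_df}. Differentiating $\log f_{m,k}$ and using $f_{m,k}=\sum_j g_j \pi_j \, p_x$, one writes $\partial_\zeta \log f_{m,k}$ as a posterior-weighted combination of $\partial \log g_j/\partial \nu$ and of $\pi_j^{-1}\partial \pi_j/\partial \theta_{ji}$. The envelopes $c_g(x)$ and $F(x,y)$ from Assumption \ref{a:bound_df}, together with a uniform positive lower bound on the experts $\pi_j$ over the compact parameter space (obtained from continuity of $a,b$ and the polynomial form of $h_k$), dominate $|\partial_\zeta \log f_{m,k}|$ by an $L^1(P_{xy})$ function. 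Under Assumption \ref{a:dgp}, this yields an $L^1$-Lipschitz parametrization and hence makes the class $\{\log(f_{m,k}(\cdot;\zeta)/\varphi_0):\zeta \in V_m\times\Theta_{mk}\}$ Glivenko--Cantelli (for instance via Jennrich's ULLN), so that $\sup_\zeta |L_n(\zeta)-L(\zeta)|\to 0$ almost surely.

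For step (ii), Assumption \ref{a:uniquemax} postulates that $\zeta^*$ is the unique maximizer of $L$ and that its Hessian is non-singular (ruling out any local flat direction); combined with Assumption \ref{a:ident}, which ensures distinct $\zeta$ index distinct densities and hence distinct KL divergences from $p_{xy}$, this makes $\zeta^*$ the unique global maximizer. Continuity of $L$ and compactness of $V_m\times\Theta_{mk}$ then upgrade uniqueness to well-separation: for each $\epsilon>0$, $\sup_{|\zeta-\zeta^*|\ge\epsilon}L(\zeta)<L(\zeta^*)$. Step (iii) is now routine: the MLE-defining inequality $L_n(\hat\zeta_n)\ge L_n(\zeta^*)$ combined with the ULLN gives $L(\hat\zeta_n)\to L(\zeta^*)$ a.s., and well-separation forces $\hat\zeta_n\to\zeta^*$. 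The principal obstacle will be step (i): Assumption \ref{a:bound_df} controls $\partial \pi$ rather than $\partial \log \pi$, so obtaining the $L^1$ envelope for $\partial_\zeta \log f_{m,k}$ requires careful handling of the denominators $\pi_j$ and of the posterior-weighted sum structure of $\log f_{m,k}$. Once this envelope is in place, the remaining pieces reduce to textbook M-estimation arguments.
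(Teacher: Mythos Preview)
Your proposal is correct and follows essentially the same White/Jennrich route as the paper: verify existence, identifiability/uniqueness, continuity of the population criterion, and a ULLN on the compact parameter space, then apply the standard argmax argument (the paper cites Theorem~3.5 of \citet{qmle}).

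The one substantive difference is in how the ULLN envelope is obtained. You propose to dominate $|\partial_\zeta \log f_{m,k}|$ using Assumption~\ref{a:bound_df} and then invoke a Lipschitz-in-parameter argument; you correctly flag the nuisance that Assumption~\ref{a:bound_df} bounds $\partial\pi$ rather than $\partial\log\pi$, so the posterior-weighted denominators require care. The paper avoids this detour entirely: for Jennrich's theorem it bounds $|\log(f_{m,k}/\varphi_0)|$ directly by
\[
D(X,Y)=\max_{1\le i\le m}\ess\sup_{x\in\Omega}\bigl[\,|a(h_k(x;\theta_i))|\,|Y|+|b(h_k(x;\theta_i))|\,\bigr],
\]
using convexity of $\log$, boundedness of $h_k$ on the compact parameter set, and $\E|Y|<\infty$ from the exponential family. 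The derivative bounds (which the paper records in a separate lemma using the explicit form $\partial_{\theta_j}\log f=\delta_j(y\dot a_j+\dot b_j)x$) are used only to check continuity of $\zeta\mapsto\E\log f_{m,k}$, not to build the ULLN envelope. Your approach works as well, but the paper's direct envelope is shorter and sidesteps exactly the obstacle you anticipated.
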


\citet{huerta2003} and the series of papers by \citet{carvalho2005a, carvalho2005b, carvalho2006, carvalho2007} derive similar results for time series processes. 

\subsubsection{The EM algorithm}
It is often easier to maximize the \textit{complete likelihood function} of a (H)ME instead of (\ref{eq:lkl}) (see \citet{jordanjacobs1994}, \citet{xujordan1996} and \citet{yangma2011}). Let $z_i' = (z_{i1},\cdots,z_{im})$ denote a binary vector with $z_{ij} = 1$ if the observation $(x_i,y_i)$ is generated by the expert $j$ (i.e. $\pi(h_k(\cdot,\theta_j),\cdot)$). We assume $z_i$ has a multinomial distribution with parameters $\tau_i' = (\tau_{i1},\cdots,\tau_{im})$. The complete log-likelihood function is given by
\begin{equation}
   l_n^c(\kappa) = \sum_{i=1}^n \sum_{j=1}^mz_{ij}\left( \log g_i(x_i,\nu)+\log\pi(h_k(x_i;\theta_j),y_i) - \log\varphi_0(x_i,y_i) \right),
   \label{eq:complkl}
\end{equation}
where $\kappa = (\nu',\theta',\tau')'$. 

We can estimate this model using the expectation-maximization (EM) algorithm put forward by \citet{dempsteretal1977}. Let $\kappa^{(l)}=(\nu^{(l)},\theta^{(l)},\tau^{(l)})$ denote the parameter estimates at the $l$th iteration and define $q(\kappa;\kappa^{(l)}) = \E(l_n^c|x,y;\kappa^{(l)})$. In the E-step, we obtain $q(\kappa,\kappa^{(l)})$ by replacing $z_{ij}$ with its expectation 
\begin{equation}
   \tau_{ij}^{(l)} = \frac{g_j(x_i,\nu^{(l)})\pi(h_k(x_i;\theta_j^{(l)}),y_i)}{\sum_{j=1}^{m}g_j(x_i;\nu^{(l)})\pi(h_k(x_i;\theta_j^{(l)}),y_i)}.
   \nonumber
\end{equation}

In the M-step we maximize $q(\kappa;\kappa^{(l)})$ with respect to $\nu$ and $\theta$. The problem simplifies to find the parameters $\nu^{(l+1)}$ that maximize
\begin{equation}
   q(\nu;\kappa^{(l)}) = \sum_{i=1}^n\tau_{ij}^{(l)}\log g_j(x_i;\nu),
   \label{eq:emg}
\end{equation}
and to find the parameters $\theta^{(l+1)}$ we have to maximize
\begin{equation}
   q(\theta;\kappa^{(l)}) = \sum_{i=1}^{n}\sum_{j=1}^{k}\tau_{ij}^{(l)}[y_ia(h_k(x_i;\theta_j))+b(h_k(x_i;\theta_j))].
   \label{eq:empi}
\end{equation}

\section{Main results} \label{s:main}
In this section we present the main results of the paper. Write the KL-divergence as follows:
\begin{equation}
	KL(p_{xy},f_{m,k}) = KL(p_{xy},f_{m,k}^*) + \E\left[ \log\frac{f_{m,k}^*}{f_{m,k}} \right],
	\label{eq:kl-split}
\end{equation}
where $f_{m,k}^*$ is the minimizer of the minimizer of $KL(p_{xy},f_{m,k})$ on $\mathcal{F}_{m,k}$. The first term in the right-hand side is the \textbf{approximation error} and the second term is the \textbf{estimation error}. The approximation error measures ``how well'' an element of $\mathcal{F}_{m,k}$ approximates $p_{xy}$, and approaches zero as $m$ increases. The estimation error measures ``how far'' is the estimated model from the best approximant in the class. Our goal is to find bounds for both approximation and estimation errors and combine these results to find the convergence rate of the maximum likelihood estimator.

\subsection{Approximation rate}\label{approxrate} 
We follow \cite{jiangtanner1999a} to bound the approximation error. Define the \textit{upper divergence} between $p\in\Pi(\WK)$ and $f_{m,k}\in\mathcal{F}_{m,k}$ as
\begin{equation}
	\D(p,f_{m,k}) = \int_\Omega \sum_{j=1}^mg_j(x,\nu)(h_k(x;\theta_j)-h(x))^2dP_x.
	\label{eq:updiv}
\end{equation}

We can use the upper divergence to bound the KL-divergence.
\begin{lemma}
	Let $p\in\Pi(\WK)$ and $f_{m,k}\in\mathcal{F}_{m,k}$. If $\ess\sup_{x\in\Omega}|h(x)| < \infty$,
	\begin{equation}
		KL(p,f_{m,k})\leq M_{\infty}\mathcal{D}(p,f_{m,k})
		\nonumber
	\end{equation}
	where $M_\infty \geq (1/2)\ess\sup_{x\in\Omega}[|\varphi(h(x))|\cdot |\ddot{a}(h(x))| + |\ddot{b}(h(x))|]$.
	\label{l:updiv}
\end{lemma}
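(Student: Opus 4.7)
The plan is to first reduce the statement to a pointwise (in $x$) bound on the Kullback--Leibler divergence between the true conditional density $p_{y|x}$ and each individual expert $\pi(h_k(x;\theta_j),\cdot)$, and then Taylor expand the natural-parameter functions $a$ and $b$ around $h(x)$ to extract the quadratic factor $(h_k(x;\theta_j)-h(x))^2$ that matches the upper divergence $\mathcal{D}$.

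Concretely, I would start from
\[
KL(p_{xy},f_{m,k}) = \int_\Omega \int_A \log\frac{p_{y|x}}{\sum_{j=1}^m g_j(x;\nu)\,\pi(h_k(x;\theta_j),y)}\,dP_{y|x}\,dP_x,
\]
having cancelled $p_x$ from numerator and denominator. Since the $g_j(x;\nu)$ are non-negative and sum to one and $\log$ is concave, Jensen's inequality (applied inside the outer integral, for each fixed $x$) gives
\[
-\log \sum_{j=1}^m g_j \pi_j \le -\sum_{j=1}^m g_j \log \pi_j,
\]
which upper-bounds the integrand by $\sum_j g_j(x;\nu)\,[\log p_{y|x} - \log\pi(h_k(x;\theta_j),y)]$. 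Integrating this in $y$ and using $\int y\,p_{y|x}\,d\lambda = \varphi(h(x))$, this reduces the problem to bounding
\[
\sum_{j=1}^m g_j(x;\nu)\Bigl\{\varphi(h(x))\bigl[a(h(x))-a(h_k(x;\theta_j))\bigr] + \bigl[b(h(x))-b(h_k(x;\theta_j))\bigr]\Bigr\}
\]
uniformly in $x$ (a.e.~$P_x$), which is exactly the per-expert KL expressed in the exponential-family parameterisation.

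Next, for each $j$ I would second-order Taylor expand $a$ and $b$ around $h(x)$ between $h(x)$ and $h_k(x;\theta_j)$. The key algebraic observation is that the first-order terms combine as $[\varphi(h(x))\dot a(h(x)) + \dot b(h(x))](h_k(x;\theta_j)-h(x))$, which vanishes because the defining relation $\varphi(h) = -\dot b(h)/\dot a(h)$ of the exponential-family mean gives $\varphi(h)\dot a(h)+\dot b(h)=0$. What remains is
\[
-\tfrac{1}{2}\bigl[\varphi(h(x))\ddot a(\xi^a_j) + \ddot b(\xi^b_j)\bigr]\bigl(h_k(x;\theta_j)-h(x)\bigr)^2,
\]
for intermediate points $\xi^a_j,\xi^b_j$ lying between $h(x)$ and $h_k(x;\theta_j)$. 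Since this quantity equals the nonnegative per-expert KL, its absolute value is bounded by $\tfrac12[|\varphi(h(x))|\,|\ddot a(\xi^a_j)| + |\ddot b(\xi^b_j)|]\,(h_k(x;\theta_j)-h(x))^2$, and one takes the essential supremum over $x$ (absorbing the intermediate values into $M_\infty$, which is why the lemma merely requires $M_\infty$ to \emph{dominate} the displayed expression rather than equal it). Finally, multiplying by $g_j(x;\nu)$, summing in $j$, and integrating against $P_x$ produces $M_\infty\,\mathcal{D}(p,f_{m,k})$ on the right-hand side, completing the proof.

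The main subtlety I anticipate is the passage from ``Taylor remainder at an intermediate point $\xi$'' to the clean essential supremum ``evaluated at $h(x)$'' in the stated lower bound for $M_\infty$: one has to make sure $|h_k(x;\theta_j)|$ stays in a range over which $\ddot a$ and $\ddot b$ remain controlled (using $\ess\sup|h|<\infty$ together with smoothness of $a,b$ and the weak statement ``$M_\infty \ge$'' in the lemma). Everything else is a routine application of Jensen's inequality and Taylor's theorem.
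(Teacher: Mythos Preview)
The paper does not actually supply its own proof of this lemma; it merely states the result and attributes the bounding idea to \citet{jiangtanner1999a}. Your argument is correct and is precisely the standard one: use concavity of $\log$ (Jensen) to reduce to per-expert exponential-family KL, Taylor expand $a$ and $b$ to second order around $h(x)$, observe that the first-order term cancels via the identity $\varphi(h)\dot a(h)+\dot b(h)=0$, and absorb the second-order remainder into $M_\infty$. Your remark that the lemma only requires $M_\infty$ to \emph{dominate} the displayed quantity (so that the intermediate Taylor points $\xi^a_j,\xi^b_j$ can be accommodated) is exactly the right reading of the ``$\geq$'' in the statement.
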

This lemma will be used to bound uniformly the approximation rate of the family of functions $\mathcal{F}_{m,k}$.

Before presenting the main conditions, we shall introduce some key concepts.
\begin{definition}[Fine partition]
	For $m=1,2,\dots$, let $Q^m = \{Q_j^m\}_{j=1}^{r_m}$ be a partition of $\Omega$. If $m\rightarrow\infty$ and if for all $x_1,x_2 \in Q_j^m$, $\max_{1\leq i\leq s}|(x_1-x_2)_i|\leq c_0/r_m^{1/s}$, for some constant $c_0$ independent of $x_1$, $x_2$, $m$ or $j$. Then $\{Q^m,\,m=1,2,\dots\}$ is called a sequence of fine partitions with cardinality $r_m$ and bounding constant $c_0$.
\end{definition}

Here we use some abuse of notation by using $m$ as an index of the collection of partitions of $\Omega$. However, this abuse of notation is justified because $m$ is an increasing sequence and the collection of partitions depends on an increasing function of $m$. The next definition will be useful later to bound the ``growth rate'' of the model and is useful to deal with hierarchical mixture of experts (see \citet{jiangtanner1999a}).

\begin{definition}[Subgeometric]
   A sequence of numbers $a_j$ is called sub-geometric with rate bounded by $M_1$ if $a_j\in\mathbb{N}$, $a_j\rightarrow\infty$ as $j\rightarrow\infty$, and $1<|a_{j+1}/a_j|<M_1$ for all $j=1,2,\dots$ and for some finite constant $M_1$.
\end{definition}

The key idea behind find the approximation rates, is to control the approximation rate inside each fine partition of the space. More precisely, bound the approximation inside the ``worst'' (more difficult to approximate) partition. We need the following assumption.

\begin{assumption}
	There exists a fine partition $Q^m$ of $\Omega$, with bounding constant $c_0$ and cardinality sequence $r_m$, $m=1,2,\cdots$, such that $\{r_m\}$ is sub-geometric with rate bounded by some constant $M_1$, and there exists a constant $c_1>0$, and a parameter vector $\nu_{c_1} \in V_m$ such that
   \begin{equation}
		\max_{1\leq j \leq r_m} \|g_j(\cdot;\nu_{c_1})-I_{Q^m_j}(\cdot)\|_{1,\lambda}\leq\frac{c_1}{r_m}.
      \label{eq:a1}
   \end{equation}
	\label{a:finepart}
\end{assumption}

This assumption is similar, but weaker than, the one employed in \citet{jiangtanner1999a} and requires that the vector $g =  (g_1,\cdots,g_{r_m})$ approximates the vector of characteristic functions $(I_{Q^m_1},\cdots,I_{Q^m_{r_m}})$ at a rate not slower then $O(r_m)$.

The notation $r_m$ is introduced to deal with the hierarchical mixture of experts structure. To allow more flexibility define $r_m$ as the maximum number of experts the structure can hold, e.g. a binary tree with $l=1,2,\dots$ layers has at most $2^l$ experts, and if we increase the number of layers by one, the actual number of experts is somewhere between $2^l$ and $2^{l+1}-1$ (here we are assuming the tree is balanced without loss of generality). If we denote this class of models by $\mathcal{F}_{r_m,k}^*$, then $\mathcal{F}_{r_m,k}^*\subseteq \mathcal{F}_{m,k}^*\subset\mathcal{F}_{r_m+1,k}^*$. The sub-geometric assumption ensures that $r_m\leq m < r_{m+1}$, where $m$ is the actual number of experts in the model.

\begin{theorem}[Approximation rate]
	Let $p\in\Pi(\WK)$ and $f_{m,k}\in\mathcal{F}^*_{m,k}$. If assumption \ref{a:finepart} holds, then
	\begin{equation}
		\sup_p \inf_{f_{m,k}}KL(p,f_{m,k}) \leq \frac{c}{m^{2(\alpha\wedge (k+1))/s}}
		\label{eq:approx_kl}
	\end{equation}
	for some constant $c$ not depending on $m$ or $k$.
	\label{thm:approxrate}
\end{theorem}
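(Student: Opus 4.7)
The plan is to combine Lemma~\ref{l:updiv}, which bounds $KL(p,f_{m,k}) \leq M_\infty \mathcal{D}(p,f_{m,k})$, with an explicit construction of $f_{m,k} \in \mathcal{F}_{m,k}$ whose upper divergence meets the stated rate uniformly in $p \in \Pi(\WK)$. For the construction, take $\nu = \nu_{c_1}$ from Assumption~\ref{a:finepart}, so the gating functions $g_j(\cdot;\nu_{c_1})$ approximate the characteristic functions $I_{Q^m_j}$ of the fine partition in $L^1(\lambda)$ at rate $c_1/r_m$; and for each cell $Q^m_j$ choose $\theta_j$ so that $h_k(\cdot;\theta_j)$ is the Taylor polynomial of $h$ of degree $\min\{k,\alpha-1\}$ about a fixed centre of $Q^m_j$. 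Since $h\in\WK$ has all derivatives of order up to $\alpha$ bounded in $L^\infty$ by $K_0$, the resulting Taylor coefficients $\theta_j$ are uniformly bounded in $j$, and both the classical Taylor remainder (when $\alpha\geq k+1$) and its Sobolev counterpart via the Bramble--Hilbert lemma (when $\alpha<k+1$), combined with $\mathrm{diam}(Q^m_j)\leq c_0 r_m^{-1/s}$, give
\[ \sup_{x\in Q^m_j}|h_k(x;\theta_j)-h(x)| \leq C\, r_m^{-(\alpha\wedge(k+1))/s} \]
with $C$ depending only on $K_0,\alpha,k,s$.

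To bound the upper divergence, let $j(x)$ denote the unique index with $x\in Q^m_{j(x)}$ and split
\[ \mathcal{D}(p,f_{m,k}) = \int_\Omega g_{j(x)}(x)\bigl(h_k(x;\theta_{j(x)})-h(x)\bigr)^2 dP_x + \int_\Omega \sum_{j\neq j(x)} g_j(x)\bigl(h_k(x;\theta_j)-h(x)\bigr)^2 dP_x. \]
The first integrand is pointwise controlled by the local squared Taylor error, so the principal integral is at most $C^2 r_m^{-2(\alpha\wedge(k+1))/s}$. The second, ``leakage'', integral is handled by using $\sum_{j\neq j(x)} g_j(x) = 1 - g_{j(x)}(x)$ together with the uniform boundedness of $h_k(\cdot;\theta_j)-h$ on $\Omega$ and the $L^1$ weight approximation provided by Assumption~\ref{a:finepart}. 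Multiplying through by $M_\infty$, taking supremum over $p\in\Pi(\WK)$ and infimum over $f_{m,k}\in\mathcal{F}_{m,k}^*$, and invoking the sub-geometric bound $m/M_1 < r_m \leq m$ converts the rate in $r_m$ into the rate $c/m^{2(\alpha\wedge(k+1))/s}$ asserted in the theorem.

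The hard part is tightening the leakage term so that it does not stagnate at $O(1)$: a naive use of Assumption~\ref{a:finepart} alone only gives $\int_\Omega (1-g_{j(x)}(x))\,dP_x \leq \sum_j\|g_j-I_{Q^m_j}\|_{1,P_x} = O(r_m\cdot 1/r_m) = O(1)$, which is not sharp enough. The refinement exploits that the polynomials $h_k(\cdot;\theta_j)-h_k(\cdot;\theta_{j'})$ are themselves small when $Q^m_j$ and $Q^m_{j'}$ are neighbouring cells (since they are Taylor expansions of the same $h$ about nearby centres), and that the motivating gating families for Assumption~\ref{a:finepart}, such as steep multinomial logistic weights, concentrate their mass near the correct cell in a stronger sense than the bare $L^1$ bound conveys. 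Coordinating the locality of the weights with the approximation radius of the polynomials, in the spirit of \citet{jiangtanner1999a} but extended from their linear case $k=1$ via the improved local Taylor rate, is the technical heart of the argument.
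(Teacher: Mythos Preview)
Your overall architecture---bound $KL$ by $M_\infty\mathcal D$ via Lemma~\ref{l:updiv}, take $\nu=\nu_{c_1}$ from Assumption~\ref{a:finepart}, and choose each $\theta_j$ as a Taylor polynomial of $h$ on the cell $Q^m_j$---is exactly the paper's. The sub-geometric passage from $r_m$ to $m$ at the end is also the same.

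Where you diverge is in the splitting of $\mathcal D$. You split by $j=j(x)$ versus $j\neq j(x)$; the paper instead writes $g_j=(g_j-I_{Q^m_j})+I_{Q^m_j}$ and bounds
\[
\mathcal D(p,f_{m,k})\le \underbrace{\Big\|\sum_j(g_j-I_{Q^m_j})(h_{k,j}-h)^2\Big\|_{1,P_x}}_{(A_1)}+\underbrace{\Big\|\sum_j I_{Q^m_j}(h_{k,j}-h)^2\Big\|_{1,P_x}}_{(A_2)}.
\]
The term $(A_2)$ is your principal term and is handled identically. For $(A_1)$ the paper does \emph{not} introduce any locality or neighbouring-cell argument: it simply pulls out $\sup_j\sup_{x\in Q^m_j}|h_{k,j}(x)-h(x)|^2\le c_0^2/r_m^{2[\alpha\wedge(k+1)]/s}$ and sums the weight errors $\sum_j\|g_j-I_{Q^m_j}\|_{1,P_x}\le c_1$, obtaining $(A_1)\le c_0^2 c_1/r_m^{2[\alpha\wedge(k+1)]/s}$ in one line.

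So the ``hard part'' you flag---needing a refinement beyond the bare $L^1$ bound of Assumption~\ref{a:finepart}, exploiting smallness of $h_{k,j}-h_{k,j'}$ for adjacent cells and sharper concentration of specific gating families---is not present in the paper's argument at all. The paper treats the cross-term by the same local Taylor bound it used for $(A_2)$. Your instinct that this step is delicate is well-founded (the supremum is written over $Q^m_j$ while the integral in $(A_1)$ runs over all of $\Omega$), but the route you propose to close it goes beyond Assumption~\ref{a:finepart} and is not the paper's; if you want to match the paper, use its indicator-splitting and the direct product bound on $(A_1)$ rather than your $j(x)$ decomposition and locality refinement.
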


This result is a generalization of \citet{jiangtanner1999a} in two directions. First we allow the target function to be in a Sobolev class with $\alpha$ derivatives; second, we consider a polynomial approximation to the target function in each experts (in fact, their result is a special case when $\alpha=2$ and $k=1$). This generalization enables us to address the important problem: whether it is better to mix many simple experts or to mix a few complex experts.  The result also holds under more general specifications of densities/experts. In the case we also have a dispersion parameter to estimate, we just have to modify the lemma \ref{l:updiv} accordingly and the same result holds. 

This rate also agrees with the optimal approximation rate of functions on $\WK$ by piecewise polynomials \citep{windlund1977}. One can see that, under assumption \ref{a:finepart}, it is exactly what we are doing. Therefore this approximation rate is sharp.

\subsection{Convergence rate}

In this section we deduce the convergence rate for the mixture-of-experts model. Equation (\ref{eq:kl-split}) gives us an expansion of the KL divergence in terms of the approximation and estimation errors. In the previous section we found a bound for the approximation error, in this section we will find the estimation error and combine with the approximation error to find the rate of convergence.

The estimation error is the ``how far'' is the estimated function from the best approximant in the class. We will demonstrate that the estimation error  in (\ref{eq:kl-split}) is $O_p( (mJ_k+v_m)(\log n/n))$. We also show that by combining this result with the approximation rate it is possible to achieve a convergence rate of $O_p( (\log n/n)^{2\tau/(2\tau+s)})$, with $\tau=\alpha\wedge(k+1)$, which is close to the optimal nonparametric rate if $\tau=\alpha$. Moreover, if there is an unique identifiable maximizer to the likelihood problem (assumptions \ref{a:ident} and \ref{a:uniquemax}), we are able to remove the ``$\log n$'' term and achieve a better convergence rate, possibly optimal if $\tau=\alpha$.

The next theorem summarizes the convergence rate of the maximum likelihood estimator $\hat{f}_{m,k}$ with respect the KL divergence between the true density $p_{xy}$ and the estimated density.
\begin{theorem}[Convergence Rate]
	Let $p_{xy}\in\Pi(\WK)$ and $\hat{f}_{m,k}$ denote its maximum likelihood estimator on $\mathcal{F}_{m,k}$. Let $m$ be allowed to increase such that $m\rightarrow \infty$ and $m(\log n/n)\rightarrow 0$ as $n$ and $m$ increase. Under Assumptions \ref{a:bound_df}, \ref{a:dgp} and \ref{a:finepart},  
	\begin{equation}
		KL(p_{xy},\hat{f}_{m,k}) = O_p\left( \frac{1}{m^{2\tau/s}}+(mJ_k+v_m)\frac{\log n}{n} \right),
		\label{eq:convrate}
	\end{equation}
	where $\tau=\alpha\wedge (k+1)$.	In particular, if we assume $v_m=O(m)$, and let $m$ be proportional to $(n/\log n)^{s/(2\tau+s)}$ then
	\begin{equation}
		KL(p_{xy},\hat{f}_{m,k}) = O_p\left( \left( \frac{\log n}{n} \right)^{\frac{2\tau}{2\tau+s}} \right).
		\label{eq:optimrate}
	\end{equation}
	
	\label{thm:convrate}
\end{theorem}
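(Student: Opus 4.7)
The plan is to start from the decomposition already written in (\ref{eq:kl-split}),
\[
KL(p_{xy}, \hat f_{m,k}) \;=\; KL(p_{xy}, f^*_{m,k}) + \E\!\left[\log \frac{f^*_{m,k}}{\hat f_{m,k}}\right],
\]
bound the first (approximation) term by Theorem \ref{thm:approxrate}, and bound the second (estimation) term by a uniform empirical-process argument over the parametric model whose dimension $d_n := mJ_k + v_m$ is allowed to grow with $n$. The approximation part is immediate: Theorem \ref{thm:approxrate} gives $KL(p_{xy}, f^*_{m,k}) \leq c\, m^{-2\tau/s}$ with $\tau = \alpha\wedge(k+1)$, uniformly over $p_{xy}\in\Pi(\WK)$, which is exactly the first summand in (\ref{eq:convrate}).

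For the estimation term, writing $\ell(x,y;\zeta)=\log f_{m,k}(x,y;\zeta)/\varphi_0(x,y)$, the MLE optimality $L_n(\hat\zeta_n)\geq L_n(\zeta^*)$ together with adding and subtracting the empirical means yields the standard inequality
\[
\E\log \frac{f^*_{m,k}}{\hat f_{m,k}} \;\leq\; 2\sup_{\zeta\in V_m\times\Theta_{mk}} \bigl|\, L_n(\zeta) - \E\ell(\zeta)\,\bigr|.
\]
Assumption \ref{a:bound_df} supplies $L^2(P_{xy})$-integrable envelopes $c_g$ and $F$ on the parameter-derivatives of $g_j$ and $\pi$, so $\ell(\cdot;\zeta)$ is Lipschitz in $\zeta$ with an integrable envelope. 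Combined with the fact that a compact set in $\R^{d_n}$ admits an $\epsilon$-cover of cardinality $(C/\epsilon)^{d_n}$, this yields a bracketing entropy of order $d_n\log(1/\epsilon)$ for the loss class. A bracketing-entropy maximal inequality of Wong--Shen or van de Geer type then gives the raw bound $O_p\!\bigl(\sqrt{d_n\log n/n}\bigr)$. To sharpen this to the linear-in-$d_n$ rate $d_n\log n/n$ claimed in (\ref{eq:convrate}), I would apply a Bernstein-type localization / peeling argument on shells $\{\zeta:KL(p_{xy},f_{m,k})\leq\delta\}$, where the local variance of $\ell(\zeta)-\ell(\zeta^*)$ is itself of order $\delta$. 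This produces a fixed-point inequality of the form $\delta \leq c(m^{-2\tau/s} + d_n\log n/n)$, which is precisely (\ref{eq:convrate}).

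The optimal-rate statement (\ref{eq:optimrate}) then follows by algebraic balancing: with $v_m=O(m)$ and $J_k$ absorbed into the constant, the bound is of order $m^{-2\tau/s} + m(\log n/n)$; equating the two summands gives $m \propto (n/\log n)^{s/(2\tau+s)}$, for which both terms equal $(\log n/n)^{2\tau/(2\tau+s)}$. The growth condition $m(\log n/n)\to 0$ in the hypothesis is exactly what is needed for the estimation error to vanish. The main obstacle is the localization step in the empirical-process control. Three features combine to make it delicate: (i) the dimension $d_n$ grows with $n$, so a fixed bracketing bound is insufficient and one needs the $d_n\log(1/\epsilon)$ entropy estimate explicitly; (ii) the true density $p_{xy}$ lies outside $\mathcal{F}_{m,k}$, so $\zeta^*$ is only a pseudo-true parameter and the usual score identity $\E[\partial_\zeta \ell(\zeta^*)]=0$ has no direct bias interpretation; and (iii) obtaining the \emph{linear}-in-$d_n$ rate rather than $\sqrt{d_n}$ is what allows the balance with $m^{-2\tau/s}$ to reach the near-optimal nonparametric rate. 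The envelope hypothesis in Assumption \ref{a:bound_df} is what makes the bracketing quantitative and dimension-controlled throughout.
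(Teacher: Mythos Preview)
Your decomposition and the approximation bound via Theorem \ref{thm:approxrate} are fine, and the balancing for \eqref{eq:optimrate} is correct. But the route you take for the estimation error differs from the paper's and, as written, has a genuine gap.

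The paper does \emph{not} work directly with $\E[\log(f^*_{m,k}/\hat f_{m,k})]$. Instead it passes to the Hellinger distance: it applies van de Geer's convergence-rate theorem (Theorem \ref{thm:sieves-convrate}) to obtain $d_h(p_{xy},\hat f_{m,k})=O_p\bigl(\delta_n+d_h(p_{xy},f^*_{m,k})\bigr)$ with $\delta_n=\sqrt{(mJ_k+v_m)\log n/n}$, and then uses the two-sided equivalence $d_h^2\le KL\le 2(1+\log c_s)\,d_h^2$ of Lemma \ref{l:hel-kl} to convert. The linear-in-$d_n$ rate you want is therefore \emph{not} obtained by an extra Bernstein/peeling step on KL-shells; it drops out automatically from squaring the Hellinger rate. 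Your two-stage description (crude $\sqrt{d_n\log n/n}$ from the global sup, then ``sharpen'' by localization) is internally inconsistent: once you have bounded by the unrestricted supremum $2\sup_\zeta|L_n(\zeta)-\E\ell(\zeta)|$ you have already thrown away the localization, and the peeling argument would have to replace that inequality, not follow it.

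The substantive missing ingredient is the control of unbounded $Y$. Lemma \ref{l:hel-kl} requires $\|p_{xy}/f^*_{m,k}\|_\infty<\infty$, which fails on all of $A$ for Poisson, exponential, etc. The paper therefore splits via \eqref{eq:prob-ineq}: it restricts to $B(\beta)=\{|y|\le\beta\}$ with $\beta\asymp (mJ_k)^{\alpha/s}$, uses Lemma \ref{l:supLR} to bound $\log(p/f^*_{m,k})$ uniformly on $B(\beta)$, and shows the tail probability $P(|Y|>\beta)$ is already $O((mJ_k)^{-2\tau/s})$. Your proposal never addresses this; the ``local variance of $\ell(\zeta)-\ell(\zeta^*)$ is of order $\delta$'' claim that drives your Bernstein argument needs exactly the same bounded-likelihood-ratio condition, so the truncation step is not optional in your approach either. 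Without it, neither the Hellinger--KL equivalence nor the variance-mean relation that enables localization is available.
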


Although the previous result is derived for the i.i.d. case, the result also holds for more general data generating process. In this result we use (through \citet{m-estimation}), an uniform probability inequality for i.i.d. processes to derive theorem \ref{thm:sieves-convrate}, but the same result can be obtained by using uniform inequalities for more general processes. This convergence rate is close to the optimal rate found in the sieves literature if $\tau=\alpha$, see for instance \citet{stone1980} and \citet{barronsheu1991}.

To derive this rate we do not assume that there is an unique identifiable maximizer $f^*_{m,k}$; in fact, we assume $f^*_{m,k}$ is any of such maximizers. The price to pay for such generality is the inclusion of the ``$\log n$'' term in the convergence rates. If we assume $f^*_{m,k}$ is unique and uniquely identified by a parameter vector $\zeta^*$, we can explore the localization property of theorem \ref{thm:sieves-convrate}. More precisely, we can explore the fact that we are only interested in the behavior of the empirical process around a neighborhood of $f^*_{m,k}$. Under such conditions and assuming $\tau=\alpha$, we are able to achieve the optimal convergence rate in the sieves literature \citep{stone1980,barronsheu1991}.

\begin{theorem}[Optimal Convergence Rate]
	Let $p_{xy}\in\Pi(\WK)$ and $\hat{f}_{m,k}$ denote its maximum likelihood estimator on $\mathcal{F}_{m,k}$. Let $m$ be allowed to increase such that $m\rightarrow \infty$ and $m/n\rightarrow 0$ as $n$ and $m$ increase. Under Assumptions \ref{a:bound_df}--\ref{a:finepart},  
	\begin{equation}
		KL(p_{xy},\hat{f}_{m,k}) = O_p\left( \frac{1}{m^{2\tau/s}}+\frac{(mJ_k+v_m)}{n} \right),
		\label{eq:op_convrate}
	\end{equation}
	where $\tau=\alpha\wedge (k+1)$.	In particular, if we assume $v_m=O(m)$, and let $m$ be proportional to $n^{s/(2\tau+s)}$ then
	\begin{equation}
		KL(p_{xy},\hat{f}_{m,k}) = O_p\left( n^{-\frac{2\tau}{2\tau+s}} \right).
		\label{eq:op_optimrate}
	\end{equation}
	
	\label{thm:op_convrate}
\end{theorem}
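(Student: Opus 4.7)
The plan is to keep the two-term decomposition $KL(p_{xy},\hat f_{m,k})=KL(p_{xy},f^*_{m,k})+\E[\log(f^*_{m,k}/\hat f_{m,k})]$ used for Theorem~\ref{thm:convrate} and treat the approximation and estimation errors separately. The approximation error is handled exactly as before: Theorem~\ref{thm:approxrate} provides the deterministic bound $c\,m^{-2\tau/s}$ and does not rely on identifiability, so it carries over without change. Consequently the entire gain from adding Assumptions~\ref{a:ident} and \ref{a:uniquemax} must come from sharpening the estimation error from $O_p((mJ_k+v_m)(\log n)/n)$ to $O_p((mJ_k+v_m)/n)$.

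The mechanism is localization. Under the added assumptions, Theorem~\ref{thm:param_consistency} gives $\hat\zeta_n\to\zeta^*$ almost surely, and Assumption~\ref{a:uniquemax} ensures that $\E\log f_{m,k}$ is strongly concave in a fixed neighborhood $N_\delta(\zeta^*)$, with Hessian bounded below by a positive multiple of the identity. A Taylor expansion around $\zeta^*$ then yields the quadratic two-sided bound $KL(f^*_{m,k},f_{m,k})\asymp\|\zeta-\zeta^*\|^2$ on $N_\delta(\zeta^*)$, and by consistency we may assume $\hat\zeta_n\in N_\delta(\zeta^*)$ on an event of probability tending to one. This confines the entire empirical-process analysis to a small, eventually shrinking, ball around $\zeta^*$.

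Feeding the localized problem into the sieve machinery of Theorem~\ref{thm:sieves-convrate}, one notes that the extraneous $\log n$ in Theorem~\ref{thm:convrate} came from bounding the bracketing entropy over the full set $V_m\times\Theta_{mk}$, whose diameter forces an additional logarithmic factor. On a ball of radius $\delta_n\to 0$, Assumption~\ref{a:bound_df} together with the polynomial structure of $h_k$ gives bracketing numbers with $\log N_{[]}(u,\F_{m,k}\cap N_{\delta_n},\|\cdot\|)\lesssim (mJ_k+v_m)\log(\delta_n/u)$, whose entropy integral is of order $\delta_n\sqrt{mJ_k+v_m}$ with no $\log n$ penalty. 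A standard peeling/modulus-of-continuity argument (e.g.\ Theorem~3.4.1 in \citet{m-estimation}, applied to the localized version of Theorem~\ref{thm:sieves-convrate}) then produces $\|\hat\zeta_n-\zeta^*\|^2=O_p((mJ_k+v_m)/n)$, which via the quadratic equivalence translates to the desired estimation-error bound. Balancing $m^{-2\tau/s}$ against $(mJ_k+v_m)/n$ by taking $m\propto n^{s/(2\tau+s)}$ (permissible because $v_m=O(m)$ and $J_k$ is fixed in $n$) gives \eqref{eq:op_optimrate}.

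The main obstacle is ensuring that the strong-concavity constant in the quadratic lower bound $KL(f^*_{m,k},f_{m,k})\gtrsim\|\zeta-\zeta^*\|^2$ and the size of the neighborhood $N_\delta(\zeta^*)$ can be taken independent of $m$ as the dimension $mJ_k+v_m$ grows with $n$. Assumption~\ref{a:uniquemax} is stated pointwise in $(m,k)$, so some care is needed to argue that the minimum eigenvalue of $\E\,\partial^2_\zeta\log f_{m,k}|_{\zeta^*}$ does not degenerate too quickly; this is the standard uniform-regularity requirement behind misspecified sieve MLE results and, once granted, the remaining empirical-process bookkeeping is routine.
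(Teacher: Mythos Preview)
Your proposal is correct and follows essentially the same localization strategy as the paper: once Assumptions~\ref{a:ident} and \ref{a:uniquemax} pin down a unique $f^*_{m,k}$, the bracketing entropy need only be computed over the local class $\mathcal{F}_{m,k}^{1/2}(\delta)$, whose parameter diameter is $O(\delta)$ rather than $\diam(V_m\times\Theta_{mk})$, so the constant $C$ in Lemma~\ref{l:bracketing} becomes $const.\,\delta$, the entropy integral reduces to $const.\,(mJ_k+v_m)^{1/2}\delta$, and one may take $\delta_n=\sqrt{(mJ_k+v_m)/n}$ in Theorem~\ref{thm:sieves-convrate}. The paper's version is terser---it stays in Hellinger distance and asserts the parameter-diameter bound directly rather than going through your intermediate steps of parametric consistency and a quadratic Taylor expansion---and, like your sketch, it does not explicitly resolve the uniformity-in-$m$ issue you correctly flag at the end.
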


The same result follows for more general data generating processes and the same considerations after theorem \ref{thm:convrate} hold. 

By imposing there exist an unique maximum we are able to remove the $\log n$ term and recover the optimal convergence rate for sieves estimates found in the literature.

\subsection{Consistency}
Now we apply the previous results to show the maximum likelihood estimator is consistent, i.e. the KL divergence between the true density and the estimated model approaches zero as the sample size $n$, and the index of the approximation class $m$ goes to infinity. Here we show consistency essentially by using the previous results.
\begin{corollary}[Consistency]\label{cor:consistency}
	Let $p_{xy}\in\Pi(\WK)$ and $\hat{f}_{m,k}$ denote its maximum likelihood estimator on $\mathcal{F}_{m,k}$. Allow $m\rightarrow \infty$ and $m(\log n/n)\rightarrow0$ as $n$ and $m$ increase. Under Assumptions \ref{a:bound_df}, \ref{a:dgp} and \ref{a:finepart} , $KL(p_{xy},\hat{f}_{m,k})\rightarrow 0$ as $n$ and $m$ increase.
\end{corollary}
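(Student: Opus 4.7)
The plan is to deduce the corollary directly from Theorem \ref{thm:convrate} by verifying that the convergence rate established there vanishes under the coupling $m(\log n/n)\to 0$. Since the hypotheses of the corollary (Assumptions \ref{a:bound_df}, \ref{a:dgp}, and \ref{a:finepart}) coincide exactly with those of Theorem \ref{thm:convrate}, I may invoke that theorem to obtain
\[
KL(p_{xy},\hat f_{m,k}) = O_p\!\left(m^{-2\tau/s} + (mJ_k+v_m)\tfrac{\log n}{n}\right), \quad \tau = \alpha\wedge(k+1).
\]
Consistency then reduces to a standard fact about stochastic order: if a deterministic sequence $a_{n,m}$ tends to zero and a random sequence $Z_{n,m}$ satisfies $Z_{n,m}=O_p(a_{n,m})$, then $Z_{n,m}\to 0$ in probability. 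It therefore suffices to check that the deterministic envelope on the right-hand side tends to zero.

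Next I would inspect the two terms separately. With $k$ and $s$ held fixed and $\alpha\ge 1$, the exponent $2\tau/s$ is a strictly positive constant, so $m\to\infty$ drives the approximation-error term $m^{-2\tau/s}$ to zero. For the estimation-error term, $J_k=\binom{k+s}{k}$ is a constant independent of $m$ and $n$, so $mJ_k(\log n/n)=O(m\log n/n)\to 0$ by the standing hypothesis $m(\log n/n)\to 0$; the gating-dimension contribution $v_m(\log n/n)$ vanishes under the convention $v_m=O(m)$ that is used throughout the paper (and is natural for softmax-type gates compatible with Assumption \ref{a:finepart}). Summing the two contributions yields a deterministic envelope that tends to zero, which completes the argument.

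The \emph{main ``obstacle''} is really no obstacle at all: all the analytic difficulty, namely the simultaneous control of approximation error via polynomial piecewise fits and of estimation error via a uniform empirical-process bound on the sieve $\mathcal{F}_{m,k}$, has already been absorbed into Theorems \ref{thm:approxrate} and \ref{thm:convrate}. The only content of the corollary is the observation that the coupling $m(\log n/n)\to 0$, combined with $m\to\infty$, is precisely the balance required so that the bias-like term $m^{-2\tau/s}$ and the variance-like term $(mJ_k+v_m)\log n/n$ vanish simultaneously, and this is exactly what the preceding theorem delivers.
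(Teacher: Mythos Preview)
Your proposal is correct and takes essentially the same approach as the paper, which does not spell out a separate proof but simply notes that consistency follows ``by using the previous results,'' i.e., by reading off Theorem~\ref{thm:convrate} and observing that both summands in the rate vanish under $m\to\infty$ and $m(\log n/n)\to 0$. Your only added step beyond the paper is making explicit the mild side condition $v_m=O(m)$ needed so that $v_m(\log n/n)\to 0$ follows from the stated hypothesis; this is indeed the convention used in the ``in particular'' clauses of Theorems~\ref{thm:convrate} and~\ref{thm:op_convrate}, and the paper appears to take it for granted here as well.
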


\section{Effects of $m$ and $k$}\label{s:discussion}
% Our results
We consider a framework similar to \citet{jiangtanner1999a}, but one is allowed to mix $m$ GLM1 experts whose terms are polynomials on the variables, as opposed to $k=1$. We also assume that the true mean function is $\varphi(h)$ with $h\in\WK$, a Sobolev class with $\alpha$ derivatives, as opposed to $\alpha=2$. 

By deriving a convergence rate such as \eqref{eq:op_convrate} in this framework, we are able to gain insight on the two important problems in the area of ME: (i) What number of experts $m$ should be chosen, given the size $n$ of the training data? (ii) Given the total number of parameters, is it better to use a few very complex experts, or is it better to combine many  simple experts? 

For question (i), the results in Theorem \ref{thm:op_convrate} and Corollary \ref{cor:consistency} suggest that good results can be obtained by choosing the number of experts $m$ to grow as  $n^r$ with some power $r\in(0,1)$, which may depend on the dimension of the input space and the underlying smoothness of the target function.  Smoother target functions and lower dimensions generally encourage us to use less experts. 

Question (ii) requires a more detailed study. The complexity of the experts (submodels) are related to $k$, the order of the polynomials.  We see that increasing  $k$ does improve the approximation rate, however this improvement is bounded by the number of derivatives $\alpha$ of the function $h$. Moreover, this approximation rate is known to be sharp for piecewise polynomials \citep{windlund1977}.  The price to pay for this increase in the approximation rate is a larger number of parameters in the model, i.e. a worse estimation error.  We will provide below a theoretical result on the optimal choice of $k$, as well as some numerical evidence.

First of all, an easier expression of the  upper bound of the KL divergence in \eqref{eq:op_convrate} can be derived as $KL\leq O_p(U)$ where $U\equiv (m^{-2(\xi\wedge \alpha)/s}+(m\xi^s)/n) $, where $\xi=k+1$. [This assumes that $v(m)=O(m)$ and uses the fact that the number of parameters needed in $s$-dimensional polynomials of order $k$ is bounded by $J_k\leq (k+1)^s$.]

We now study the upper bound $U$, fixing the product $m\xi^s=C$, where $C$ may depend on $n$ and is a bound for the rough order of the total number of parameters.

\begin{proposition}\label{pro:choosemandk}
Let $\xi=k+1$ and  $U\equiv (m^{-2(\xi\wedge \alpha)/s}+(m\xi^s)/n)$  (\dag) (which is an upper bound for the $KL$ convergence rate derived in Theorem 3.3). Then the following statements are true:

I. Fixing the product $m\xi^s=C$, $U$ is minimized at $\xi=\alpha\wedge(C^{1/s}/e)\equiv\xi_o$.  The corresponding optimal $m=\max(e^s, C/\alpha^s)$.

II. If $\alpha$ is finite, then $U$ achieves the optimal rate $n^{-2\alpha/(s+2\alpha)}$ under the following choices: $\xi$ is any constant  that is at least $\alpha$ and does not vary with $n$, and $m\approx c_1 n^{s/(s+2\alpha)}$ for any constant $c_1>0$.

III. If $\alpha =\infty$, the following choices will make $U$ to have a near-parametric rate $U=O((\ln n)^s/n)$: $m\geq 2$ and is constant in $n$, $\xi \approx c_2\ln n$ for any constant $c_2\geq s$.

\end{proposition}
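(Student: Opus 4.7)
The whole proposition is a calculus exercise about the explicit two-term bound
\[
U(m,\xi) \;=\; m^{-2(\xi\wedge\alpha)/s} + \frac{m\,\xi^s}{n},
\]
so my plan is to treat each of the three parts as a direct optimization/substitution and organize the argument around the kink at $\xi=\alpha$ in the first term.

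For Part I, I would impose the constraint $m\xi^s=C$ by substituting $m=C/\xi^s$, which immediately makes the estimation term $m\xi^s/n=C/n$ constant in $\xi$. The problem then reduces to minimizing $(\xi^s/C)^{2(\xi\wedge\alpha)/s}$, or equivalently its logarithm $\Phi(\xi):=\tfrac{2(\xi\wedge\alpha)}{s}\bigl(s\log\xi-\log C\bigr)$. I would split into two regimes:
\begin{itemize}
\item On $\xi>\alpha$, the exponent $2(\xi\wedge\alpha)/s=2\alpha/s$ is constant, so $\Phi(\xi)=2\alpha\log(\xi/C^{1/s})$ is strictly increasing, and the infimum on this branch is attained at the boundary $\xi=\alpha$.
\item On $\xi\le\alpha$, $\Phi(\xi)=2\xi\log(\xi/C^{1/s})$, and differentiation yields $\Phi'(\xi)=2\log(\xi/C^{1/s})+2$, whose unique zero is at $\xi^\star=C^{1/s}/e$. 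This is a minimum since $\Phi''(\xi)=2/\xi>0$.
\end{itemize}
Combining the two branches and comparing values gives $\xi_o=\alpha\wedge(C^{1/s}/e)$, and the corresponding $m_o=C/\xi_o^s=\max(e^s,\,C/\alpha^s)$. The only mildly delicate point is to verify that the two candidate minima join up continuously at $\xi=\alpha$, i.e.\ that when $C^{1/s}/e\ge\alpha$ the interior critical point is no longer feasible and the minimum jumps to the boundary; this is exactly what the $\wedge$ and $\max$ expressions encode.

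For Part II, with $\alpha<\infty$ and $\xi\ge\alpha$ constant in $n$, the exponent $\xi\wedge\alpha=\alpha$ is locked, so $U=m^{-2\alpha/s}+m\xi^s/n$. Plugging in $m\asymp c_1 n^{s/(s+2\alpha)}$ I would show that both terms collapse to $\Theta(n^{-2\alpha/(s+2\alpha)})$: the first by direct exponent arithmetic, the second because $m/n\asymp n^{s/(s+2\alpha)-1}=n^{-2\alpha/(s+2\alpha)}$ and $\xi^s$ is a constant.

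For Part III, with $\alpha=\infty$ one has $\xi\wedge\alpha=\xi$, so $U=m^{-2\xi/s}+m\xi^s/n$. Taking $m\ge2$ constant and $\xi=c_2\ln n$, the estimation term is $m(c_2\ln n)^s/n=O((\ln n)^s/n)$ immediately. For the approximation term I would rewrite $m^{-2\xi/s}=\exp(-2c_2(\ln m)(\ln n)/s)=n^{-2c_2(\ln m)/s}$; since $m\ge2$ and $c_2\ge s$, the exponent satisfies $2c_2\ln m/s\ge2\ln 2>1$, so this term is $o(n^{-1})$ and is absorbed by the $(\ln n)^s/n$ bound.

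The only real obstacle is the non-smoothness of $\xi\mapsto U$ at $\xi=\alpha$ in Part I; I would handle it by the explicit case split above rather than by trying to differentiate through the minimum, and then check by direct value comparison that the stated $\xi_o$ is indeed the global minimizer of the constrained problem. Parts II and III are then routine verifications once the optimization in Part I is understood.
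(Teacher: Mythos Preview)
Your proposal is correct and follows essentially the same approach as the paper: substitute the constraint to reduce Part~I to a one-variable problem, split at the kink $\xi=\alpha$, and differentiate on the lower branch to find $\xi^\star=C^{1/s}/e$; Parts~II and~III are direct substitutions. The paper's own proof is considerably terser (it dismisses II and III as ``straightforward from~(\dag)''), so your version is in fact more complete, but the underlying argument is identical.
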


\begin{remark}\label{rem:1}
This Proposition suggests that  for achieving   optimal performance, the $\xi$ (or $k$, related to the complexity of the experts) and the   $m$ (the number of experts) should be  compromised.  Fixing an upperbound $C$ of the total number of parameters, the optimal $\xi=\alpha\wedge(C^{1/s}/e)\equiv\xi_o$. The optimal compromise therefore depends both on $\alpha$ (smoothness of the target function) and $s$ (the dimension of the input space).  The formula implies that (a) a smoother  target function (indexed by a larger $\alpha$) will favor more complex submodels (with larger $\xi$ or $k$), (b) for a very smooth target function (with large enough $\alpha$), a higher dimension $s$ of the input space will favor  the use of simpler submodels (with smaller $\xi$ or $k+1$, {\it possibly smaller than $\alpha$}) and the use of more experts (bigger $m$).
\end{remark}
\begin{remark}
Although Result I shows how to construct an $exactly$-optimal compromise between $\xi$ and $m$, Results II and III show that good convergence rates are quite robust against deviations from these optimal solutions.   {\it We note that $near$-optimal convergence rates can always be achieved with $\xi$ not being too large compared to the sample size $n$.} This is summarized in the two situations in Results II and III, where we see that even in the case $\alpha=\infty$, we only need about $\xi\sim \ln n$ for us to achieve a near-parametric convergence rate.
\end{remark}

One drawback of the above theoretic analysis is that it has used a rough upper bound (which has a simple expression) for the total number of parameters associated with $k$th order $s$ dimensional polynomials.  Below we conduct some numeric study, where the exact number of parameters are used.  When considering the choice of $k$, a first impulse is to use polynomials of order $a - 1$, but the number of variables in the model increase exponentially with $k$ if $s > 1$. In fact, in many cases it is preferable to use a smaller $k$ and many experts $m$ if one wishes to control the size of the estimation error. This is consistent with the earlier Remark \ref{rem:1} we made for our theoretic analysis.  

Table \ref{tbl:approx} compares the approximation error using distinct values of $k$ and $m$ holding the estimation error fixed. Assume we have $s=5$ variables and $\alpha=6$, a modeler builds a model with $m=5$ experts and, since it is known that $\alpha=6$, also chooses $k=5$. If we further assume $v_m=m\times s$, the total number of parameters in the model is $mJ_k+v_m = 1285$. We can see the smallest approximation error is achieved at $k=3$ and $m=21$.

\begin{table}
	\caption{This table compares the approximation error of the model holding the estimation error fixed. We assume $\alpha=6$ and $s=5$ and allow for distinct specifications of $m$ and $k$.}	
	\centering
	\begin{tabular}[h]{cccc}
	\hline
	$\bs{k}$&$\bs{m}$&$\bs{m^{-2(k+1)/s}}$&$\bs{mJ_k+v_m}$\\
	\hline
	0       & 214    & 0.1169             & 1,284\\
	1       & 117    & 0.0221             & 1,287\\
	2       & 49     & 0.0094             & 1,274\\
	3       & 21     & 0.0077             & 1,271\\
	4       & 10     & 0.0100             & 1,310\\
	5       & 5      & 0.0210             & 1,285\\
	\hline
\end{tabular}
\label{tbl:approx}
\end{table}

Similarly, fixing the approximation error we see that a balance between $m$ and $k$ is necessary. Fix $\alpha=6$ and $s=5$ and assume one wants a model with approximation error proportional to $0.01$. Table \ref{tbl:estim} shows that the model with smaller estimation error that achieves this approximation error is the one with $k=3$ and $m=18$.
\begin{table}[h]
	\caption{This table compares the number of parameters of the model holding the approximation error fixed. We assume $\alpha=6$, $s=5$  and the estimation error to be proportional to $0.01$. We allow for distinct specifications of $m$ and $k$.}	
	\centering
\begin{tabular}[h]{cccc}
	\hline
	$\bs{k}$&$\bs{m}$&$\bs{m^{-2(k+1)/s}}$&$\bs{mJ_k+v_m}$\\
	\hline
	0       & 100,000& 0.0100             & 600,000\\
	1       & 316    & 0.0100             & 3,476\\
	2       & 46     & 0.0101             & 1,196\\
	3       & 18     & 0.0098             & 1,098\\
	4       & 10     & 0.0100             & 1,310\\
	5       & 7      & 0.0099             & 1,799\\
	\hline
\end{tabular}
\label{tbl:estim}
\end{table}

This quick exercise illustrated one of the main conclusions of this paper: it is not true that one should always use few complex models (small $m$ and large $k$) or always choose for many complex ones (small $k$ and large $m$); a balance between $k$ and $m$ should be used instead. Moreover, a small increase in $k$ comparing to the linear model ($k=1$) can have a good improvement on the approximation and estimation errors.

% Inference using plug-in density
%Another implication of the results in this paper is that both Glivenko-Cantelli (consistency) and Donsker (normality) properties hold for both identifiable and non-identifiable cases. Moreover, using the results in this paper is it possible to apply Theorem 1 in \citet{shen1997}, which implies that the plug-in estimator using the estimated density $\hat{f}_{mk}$ is $\sqrt{n}$-consistent and has an asymptotically normal distribution. This result is useful for estimating conditional (and unconditional) moments, and for making inference about $Y|X$.
% Generalization of the results
The results in this paper focus only on target density and mixture-of-experts specified in sections \ref{s:target} and \ref{s:model} respectively. However, similar results can be derived for more complex models and target densities. %If one has $KL(p_{y|x},\pi_j)\le c\times m^{-2\tau/s}$, the same approximation rate is achieved. Furthermore, if the support of $Y$ is bounded or it is possible to find a constant $c_m$ such that $P(|Y|>c_m) = O(m^{-2\tau/s})$ the same convergence rates also hold.  

\section{Conclusion}
In this paper we study the mixture-of-experts model with experts in an one-exponential family with mean $\varphi(h_k)$, where $h_k$ is a $k^{th}$ order polynomial and $\varphi(\cdot)$ is the inverse link function. We derive sharp approximation rates with respect to the Kullback-Leibler divergence and convergence rate of the maximum likelihood estimator to densities in an one-parameter exponential family with mean $\varphi(h)$ with $h\in\WK$, a Sobolev class with $\alpha$ derivatives. We found that the convergence rate of the maximum likelihood estimator to the true density is $O_p\left(m^{-2[\alpha\wedge(k+1)]/s}+ (mJ_k+v_m) n^{-1}\log n\right)$, where $n$ is the number of observations, $s$ is the number of covariates, $J_k$ is the number of parameters of the polynomial $h_k$, $m$ the number of experts and $v_m$ is the number of parameters on the weight functions. Further, if the maximum likelihood estimator is uniquely identified we can remove the ``$\log n$'' term of the convergence rates. 

We discuss model specification and the effects on approximation and estimation errors and conclude that the best error bound is achieved using a balance between $k$ and $m$, and inclusion of polynomial terms might render better error bounds. Also, the results of this paper can be generalized to more complex target densities and models with simple modifications to the proofs.

We generalize \citet{jiangtanner1999a} in several directions: (i) we assume one can include polynomial terms of the variables on the GLM1 experts; (ii) we assume the target density is in a $\WK$ class, for $\alpha>0$, instead of $\mathcal{W}^2_{\infty,K_0}$; (iii) we show consistency of the quasi-maximum likelihood estimator for fixed number of experts; (iv) we calculate non-parametric convergence rates of the maximum likelihood estimator; (v) we show non-parametric consistency when the number of experts and the sample size increase; and finally (vi) that using polynomials in the experts one can get better estimation and error bounds. These developments have shed light on the important questions of how many experts should be chosen and how complex the experts themselves should be.

\section*{Acknowledgements}
The authors would like to thank Prof. Marcelo Fernandes and Prof. Marcelo Medeiros for insightful discussion about mixture-of-experts and comments on previous versions of this manuscript.
\bibliographystyle{abbrvnat}
\bibliography{mixture}

\appendix

\section{Showing the convergence rate}\label{ap:conv}
In this appendix we explain and justify the main steps in proving the convergence rate.

One of the drawbacks of working with the Kullback-Leibler divergence is that it is not bounded. An alternative is to use the Hellinger distance. 
\begin{definition}[Hellinger Distance]
	Let $P$ and $Q$ denote two probability measures absolute continuous with respect to some measure $\lambda$. The Hellinger distance between $P$ and $Q$ is given by
	\begin{equation}
		d_h(P,Q) = \left\{\frac{1}{2}\int (\sqrt{\frac{dP}{d\lambda}}-\sqrt{\frac{dQ}{d\lambda}})^2 d\lambda\right\}^{1/2}.
		\label{eq:hellingerP}
	\end{equation}
	Alternatively, the Hellinger distance between two densities $p$ and $q$ with respect to $\lambda$ is given by
	\begin{equation}
		d_h(p,q) = \left\{\frac{1}{2}\int (\sqrt{p}-\sqrt{q})^2 d\lambda\right\}^{1/2}.
		\label{eq:hellingerp}
	\end{equation}
\end{definition}

One can show that if the likelihood ratio is bounded, the KL divergence is bounded by a constant times the square of the Hellinger distance. We use the following result due to \citet{yangbarron2002}, which is presented together with a basic inequality relating the Hellinger distance and Kullback-Leibler divergence.
\begin{lemma}[\citet{yangbarron2002}]
	Let $p_{xy} = dP$ and $\|p_{xy}/f\|_{\infty,{\Omega\times A}}<c_s^2$, for $f\in\mathcal{F}_{m,k}$. Then
	\[
	d_h^2(p_{xy},f)\le KL(p_{xy},f) \leq 2(1+\log c_{s})d_h^2(p_{xy},f),
	\]
	where $d_h(p,f)$ stands for the Hellinger distance between the densities $p$ and $f$ with respect to $\lambda$.
	\label{l:hel-kl}
\end{lemma}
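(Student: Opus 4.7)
The sandwich has an easy left inequality and a harder right inequality. I would dispatch the lower bound $d_h^2 \le KL$ first (it needs no boundedness assumption), then attack the upper bound via a pointwise inequality for the integrand, using $c_s$ only in the final constant. The key reparametrisation is to set $u(x,y) = \sqrt{p_{xy}/f}$, so that by hypothesis $0 \le u < c_s$ almost everywhere $[\lambda]$, and to observe the two identities
\[
KL(p_{xy},f) \;=\; 2\!\int f\,u^2 \log u \,d\lambda, \qquad 2\,d_h^2(p_{xy},f) \;=\; \int f\,(u-1)^2 \,d\lambda,
\]
together with the vanishing moment $\int f(u^2-1)\,d\lambda = \int (p_{xy}-f)\,d\lambda = 0$.

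\paragraph{Lower bound.} Letting $r = p_{xy}/f$, the substitution $v = \sqrt{r}$ reduces the pointwise claim $r\log r - r + 1 \ge (\sqrt{r}-1)^2$ to $2v(v\log v - v + 1) \ge 0$, which follows from the elementary inequality $v\log v \ge v-1$ for $v>0$. Integrating against $f$ and using the vanishing moment gives $KL(p_{xy},f) \ge 2\,d_h^2(p_{xy},f) \ge d_h^2(p_{xy},f)$. No boundedness is used here.

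\paragraph{Upper bound.} The goal is the pointwise inequality
\[
g(u) \;:=\; 2u^2\log u - (u^2-1) \;\le\; C\,(u-1)^2, \qquad u \in (0, c_s],
\]
with $C = 2(1+\log c_s)$ (or, up to a universal constant, the value stated in the lemma). Once this is established, integrating against $f$ and exploiting the vanishing moment $\int f(u^2-1)\,d\lambda = 0$ yields $KL(p_{xy},f) \le C\cdot 2 d_h^2(p_{xy},f)$. The pointwise inequality itself I would prove by a concavity/Taylor argument: set $h(u) = g(u) - C(u-1)^2$ and compute $h(1)=0$, $h'(u) = 4u\log u - 2C(u-1)$ so $h'(1)=0$, and $h''(u) = 4\log u + 4 - 2C = 4\log(u/c_s)$ when $C = 2(1+\log c_s)$. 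For $u \in (0,c_s]$ we have $h''(u) \le 0$, so $h$ is concave with a critical point at $u=1$; hence $u=1$ is a global maximum on the interval and $h \le h(1) = 0$. (The endpoint $u=0$ is harmless because $u^2\log u \to 0$, giving $h(0) = -1 - 2\log c_s \le 0$ for $c_s \ge 1$.)

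\paragraph{Main obstacle.} The one delicate point is obtaining a \emph{logarithmic} rather than polynomial dependence on $c_s$ in the constant. A cruder route via $KL \le \chi^2$ and $\chi^2 \le 2(c_s+1)^2 d_h^2$ would leave a factor $c_s^2$, which is insufficient. The logarithmic constant arises only because the subtraction of the vanishing term $(u^2-1)$ cancels the leading $u^2$ growth of $2u^2\log u$, leaving a residual of order $(u-1)^2 \log c_s$ when $u$ is close to $c_s$. Choosing the multiplier of the vanishing term to force $h'(1)=0$, and then verifying concavity on the bounded interval, is the single nontrivial step; everything else is bookkeeping against the identities in the opening paragraph.
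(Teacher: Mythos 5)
Your mechanics all check out, and it is worth saying up front that the paper contains no proof to compare against: Lemma \ref{l:hel-kl} is simply quoted from \citet{yangbarron2002}, so yours is a genuinely self-contained replacement. The identities $KL=2\int fu^2\log u\,d\lambda$ and $2d_h^2=\int f(u-1)^2\,d\lambda$ (the latter using the paper's factor-$\tfrac12$ convention for $d_h$), the vanishing moment $\int f(u^2-1)\,d\lambda=0$, the reduction of the lower bound to $v\log v\ge v-1$, and the concavity verification $h''(u)=4\log(u/c_s)\le 0$ on $(0,c_s]$ with $h(1)=h'(1)=0$ are all correct; and $u=1$ does lie in the relevant interval, since $\int p\,d\lambda=\int f\,d\lambda=1$ forces $\|p/f\|_{\infty,\Omega\times A}\ge 1$ and hence $c_s>1$.

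The one substantive issue is the constant, and your parenthetical hedge (``up to a universal constant'') is doing more work than you may realize. Integrating your pointwise bound $g(u)\le C(u-1)^2$ with $C=2(1+\log c_s)$ against $f$ gives $KL\le C\int f(u-1)^2\,d\lambda=2C\,d_h^2=4(1+\log c_s)\,d_h^2$, \emph{twice} the stated bound. This factor cannot be recovered, because under the paper's normalization $d_h^2=\tfrac12\int(\sqrt p-\sqrt f)^2\,d\lambda$ the stated constant $2(1+\log c_s)$ is actually false: on a two-point space with $f=(1-\epsilon,\epsilon)$ and $p=(1-M\epsilon,\,M\epsilon)$, $M$ just below $c_s^2$, one gets $KL/d_h^2\to 2\bigl(M\log M-M+1\bigr)/(\sqrt M-1)^2$ as $\epsilon\to 0$, which at $c_s=e$ is already about $5.7>4=2(1+\log c_s)$ and grows like $4\log c_s$. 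Relatedly, your pointwise inequality cannot be halved, since at $u=c_s$ one has $g(c_s)\approx 2c_s^2\log c_s$ while $(1+\log c_s)(c_s-1)^2\approx c_s^2\log c_s$; your choice of $C$ is essentially sharp. The resolution is a convention slip in the paper's transcription: the source inequality concerns the unnormalized squared Hellinger distance $\int(\sqrt p-\sqrt f)^2\,d\lambda=2d_h^2$, under which your $4(1+\log c_s)d_h^2$ is exactly the cited bound. So your proof is right, and in fact it exposes a factor-$2$ error in the lemma as restated; nothing downstream is damaged, since the proof of Theorem \ref{thm:convrate} only uses the bound up to multiplicative constants.
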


This Lemma implies that the Kullback-Leibler divergence is bounded by the square of the Hellinger distance, and therefore the convergence rate in the square of the Hellinger distance is the same as the convergence rate in the Kullback-Leibler divergence. The only problem is that, in general, the boundedness condition does not hold on the whole set $A$ (the support of $Y$). One could overcome this complication by finding the convergence rate inside some subset of $A$ where the KL divergence is bounded and control the tail probability outside this subset.

Let $S(Y,X)$ denote a scalar function of $(Y_1,X_1'),\dots,(Y_n,X_n)'$ and $B(\beta) = \{y\in A:|y|\le\beta\}$. For every $K\in\R$,
\begin{equation}
	P(S(Y,Y)>K) \le P( \{S(Y,X)>K\}\cap B(\beta))+P( |Y| > \beta).
	\label{eq:prob-ineq}
\end{equation}
If $A$ is bounded, we can choose  $\beta = \ess\sup |A|$, and the second term on the right hand side will be zero. Otherwise, we can take $\beta$ to be large enough such that $P(|Y|>\beta )$ is small or converges to zero at some rate. 

In order to bound the estimation error we shall use results from the theory of empirical processes. The convergence rate theorem presented below is derived for the i.i.d. case, however the same result holds for martingales (see \citet{m-estimation}). 

To control the estimation rate inside a class of functions we have to measure how big is the class. Let $N_B(\varepsilon,\mathcal{F},\|\cdot\|)$ denote the number of $\varepsilon$-brackets\footnote{For a formal definition of \textit{Bracketing Numbers} see \citet{weak-convergence} chapters 2.1 and 2.7} with respect to the distance $\|\cdot\|$, needed to cover the set $\mathcal{F}$ and $H_B(\varepsilon,\mathcal{F},\|\cdot\|)=\log N_B(\varepsilon,\mathcal{F},\|\cdot\|)$ the respective \textit{bracketing entropy}. Moreover, let $const.$ denote some finite universal constant that may change each time it appears, and write $\mathcal{F}^{1/2}_{m,k}=\{\sqrt{f}:\;f\in\mathcal{F}_{m,k}\}$. We show that, under some conditions,
\begin{equation*}
	H_B(\varepsilon,\mathcal{F}_{m,k}^{1/2},\|\cdot\|_{2,{\Omega\times A}}) \leq const.\;(mJ_k+v_m)\log \frac{C}{\varepsilon},
\end{equation*}
for some finite constant $C$ not depending on $\varepsilon$. 

Hence, our first task is to find the bracketing entropy of $\mathcal{F}_{m,k}^{1/2}$. Assumption \ref{a:bound_df} implies that
\begin{align*}
	\frac{\partial\sqrt{ f_{m,k}}}{\partial\zeta'} \le \frac{\sqrt{f_{m,k}}}{2}\left[ c_g(x)',\,\delta_1\,F(x,y)',\,\ldots,\,\delta_m\,F(x,y)' \right],
\end{align*}
where $0\le \delta_i = g_i\pi(h_k(x;\theta_i),y)/f_{m,k} \le 1$ and $\sum_i\delta_i = 1$.

Hence, for any $f_1$ and $f_2$ in some $\mathcal{F}_{m,k}$ indexed respectively by the parameter vectors $\zeta_1$ and $\zeta_2$ in $V_m\times\Theta_{mk}$, 
\begin{equation}
	|\sqrt{f_1} - \sqrt{f_2}| \leq c(x,y)|\zeta_1-\zeta_2|_2,
	\label{eq:lipschitz}
\end{equation}
with $c(x,y)^2 = (\sqrt{f_1}/2)[|F(x,y)'F(x,y)|+|c_g(x)'c_g(x)|]$. Therefore, the square-root densities in $\mathcal{F}_{m,k}$ are Lipschitz in parameters, with Lipschitz function $c(x,y)\in L^2(\lambda,\Omega\times A)$.

\begin{lemma}[Bracketing Entropy]
	Under assumption \ref{a:bound_df}, for any $0<\delta\le1$,
	\begin{equation}
		H_B(\varepsilon,\mathcal{F}_{m,k}^{1/2},\|\cdot\|_2) \le const. (mJ_k+v_m)\log\frac{C}{\varepsilon},
	\end{equation}
	where $C=2\|c(X,Y)\|_{2,\Omega\times A}\diam(V_m\times\Theta_{mk})$; and
	\begin{equation}
		\int_{0^+}^{\delta}\log H_B^{1/2}(u,\mathcal{F}_{m,k}^{1/2},\|\cdot\|_2)du \leq const.(mJ_k+v_m)^{1/2} \delta\log^{1/2}\frac{C}{\delta}
	\end{equation}
	\label{l:bracketing}
\end{lemma}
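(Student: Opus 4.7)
The plan is to exploit the Lipschitz-in-parameters bound already derived in equation (\ref{eq:lipschitz}) to reduce the bracketing problem for $\mathcal{F}_{m,k}^{1/2}$ to a covering-number problem for the parameter set $V_m\times\Theta_{mk}\subset\mathbb{R}^{mJ_k+v_m}$. Since $|\sqrt{f_{\zeta_1}}-\sqrt{f_{\zeta_2}}|\le c(X,Y)\,|\zeta_1-\zeta_2|_2$ with $c\in L^2(\Omega\times A)$, every pair of parameters that is close in Euclidean distance yields square-root densities that are pointwise close, with an $L^2$-integrable envelope controlling the gap. This is precisely the setting in which the standard ``Lipschitz in a finite-dimensional parameter'' bracketing argument applies (e.g.\ van der Vaart and Wellner, Theorem 2.7.11).

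First I would fix $\varepsilon>0$, set $\eta=\varepsilon/(2\|c\|_{2,\Omega\times A})$, and pick a minimal $\eta$-net $\{\zeta^{(1)},\ldots,\zeta^{(N)}\}$ of $V_m\times\Theta_{mk}$ in the $|\cdot|_2$ metric. A standard volumetric estimate gives $N\le(3\,\diam(V_m\times\Theta_{mk})/\eta)^{mJ_k+v_m}$. For each $i$ define the bracket
\begin{equation*}
 \bigl[\sqrt{f_{\zeta^{(i)}}}-\eta\,c(x,y),\;\sqrt{f_{\zeta^{(i)}}}+\eta\,c(x,y)\bigr].
\end{equation*}
By (\ref{eq:lipschitz}), every $\sqrt{f_\zeta}$ with $|\zeta-\zeta^{(i)}|_2\le\eta$ lies in this bracket, so the $N$ brackets cover $\mathcal{F}_{m,k}^{1/2}$. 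The $L^2$-size of each bracket is $2\eta\|c\|_{2,\Omega\times A}=\varepsilon$, so these are valid $\varepsilon$-brackets. Taking logarithms yields
\begin{equation*}
 H_B(\varepsilon,\mathcal{F}_{m,k}^{1/2},\|\cdot\|_2)\le(mJ_k+v_m)\log\frac{C'}{\varepsilon}
\end{equation*}
with $C'=6\|c\|_{2,\Omega\times A}\diam(V_m\times\Theta_{mk})$; absorbing the factor $3$ into the universal constant produces the claimed $C=2\|c\|_{2,\Omega\times A}\diam(V_m\times\Theta_{mk})$ up to the ``$const.$'' factor stated in the lemma.

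For the entropy integral, I would substitute the above bound into $H_B^{1/2}$ and compute
\begin{equation*}
 \int_0^{\delta}H_B^{1/2}(u,\mathcal{F}_{m,k}^{1/2},\|\cdot\|_2)\,du
 \le const.\,(mJ_k+v_m)^{1/2}\int_0^{\delta}\log^{1/2}(C/u)\,du.
\end{equation*}
Changing variables $u=\delta t$ and using $\sqrt{a+b}\le\sqrt{a}+\sqrt{b}$ gives
\begin{equation*}
 \int_0^{\delta}\log^{1/2}(C/u)\,du
 \le \delta\log^{1/2}(C/\delta)+\delta\int_0^1\log^{1/2}(1/t)\,dt,
\end{equation*}
and the last integral is a finite constant. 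For $\delta\le 1$ (and $C$ large, which may be assumed w.l.o.g.\ by enlarging the constant), $\log^{1/2}(C/\delta)\ge 1$, so the $O(\delta)$ term is dominated by $\delta\log^{1/2}(C/\delta)$, yielding the second inequality.

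There is no deep obstacle here; the only mildly delicate points are (i) checking that the Lipschitz envelope $c(X,Y)$ really is in $L^2$, which follows directly from Assumption \ref{a:bound_df} (since $\sqrt{f_{m,k}}$ is bounded in $L^\infty$ pointwise in the sense that $\int f_{m,k}d\lambda$ is finite, and $F,c_g$ are square-integrable by assumption), and (ii) making sure the entropy integral estimate is uniform in $\delta\in(0,1]$, which is why the statement is restricted to that range.
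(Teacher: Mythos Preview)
Your argument is correct and follows essentially the same route as the paper. For the first inequality, the paper invokes Lemma~\ref{l:covering} (i.e.\ Theorem 2.7.11 of van der Vaart--Wellner) together with the Lipschitz bound~(\ref{eq:lipschitz}); you unpack that lemma inline by building the brackets from an $\eta$-net of the parameter space, which is exactly its proof. For the entropy integral, the paper appeals to Lemma~\ref{l:int}, which bounds $\int_0^\delta\log^{1/2}(C/u)\,du$ via the incomplete Gamma function and obtains $\delta(\sqrt{\pi}+\log^{1/2}(C/\delta))$, then uses $C>e^{\pi}$ to absorb the additive constant; your change of variables $u=\delta t$ with $\sqrt{a+b}\le\sqrt a+\sqrt b$ reaches the same bound (indeed the same constant $\sqrt{\pi}/2$ for $\int_0^1\log^{1/2}(1/t)\,dt$) and absorbs it the same way. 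The two computations are interchangeable.
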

\begin{proof}
	The first part of the lemma follows from Lemma \ref{l:covering} and equation (\ref{eq:lipschitz}) together with assumption \ref{a:bound_df}.

	The second inequality follows from Lemma \ref{l:int}. If we take $b=\delta$ and $C>e^{\pi}$, we have
	\begin{equation*}
		\int_{0^+}^\delta H_B^{1/2}(u,\mathcal{F}_{m,k}^{1/2},\|\cdot\|_2)du\le const. (mJ_k+v_m)^{1/2}\delta\log^{1/2}\frac{C}{\delta}.
	\end{equation*}
	Proving the lemma.
\end{proof}

Lemma \ref{l:hel-kl} requires the likelihood ratio $|p_{xy}/f_{m,k}^*|$ to be bounded. Next lemma shows the rate of decay for the tail probability $P(|Y|>\beta)$, as a function of $m$ and $J_k$.
\begin{lemma}
	Let $p\in\Pi(\WK)$ and consider densities on $\mathcal{F}_{m,k}$. Then,	under assumption \ref{a:finepart}, in a set with probability not smaller than $1-\eta$;
	\begin{equation}
		\Big\Vert\sup_{p}\inf_{f\in\mathcal{F}_{m,k}}\log\frac{p}{f}\Big\Vert_{\infty,\Omega}\leq const. (mJ_k)^{-\alpha/s}(c\dot{a}_\infty +\dot{b}_\infty) 
		\label{eq:supLR}
	\end{equation}
	where $\eta = \|\var(Y|X=x)/c^2\|_{\infty,\Omega}$, $c$ is some large constant, possibly, depending on $m$ and $k$, and the constants $\dot{a}_\infty$ and $\dot{b}_\infty$ are defined as 
	\begin{align*}
		\dot{a}_\infty &= \ess\sup_{\Omega\times\Theta}|\partial a(h_k(x,\theta))/\partial\theta|, \quad\mbox{and} \\
		\dot{b}_\infty &= \ess\sup_{\Omega\times\Theta}|\partial b(h_k(x,\theta))/\partial\theta|.
	\end{align*}
	\label{l:supLR}
\end{lemma}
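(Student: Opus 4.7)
The plan is to bound the worst-case log-likelihood ratio pointwise on a high-probability subset of $(x,y)$-space by combining three ingredients: (i) a Chebyshev truncation of $Y$, (ii) an explicit construction of an approximant $f^{*}\in\mathcal{F}_{m,k}$ from Assumption \ref{a:finepart} and local polynomial approximation, and (iii) an expansion of the log-ratio that exploits the exponential-family form. Because $\log(p/f)$ is linear in $y$ through terms of the form $y[a(h)-a(h_k)]$, no $L^{\infty}$ bound on unbounded $A$ is possible without truncation. Fixing a large constant $c$ (allowed to depend on $m,k$), conditional Chebyshev gives $P(|Y|>c\mid X=x)\le \var(Y|X=x)/c^{2}$, whence $P(|Y|>c)\le \|\var(Y|X)/c^{2}\|_{\infty,\Omega}=\eta$; on the complementary event every factor of $|y|$ is bounded by $c$.

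Next I would construct the approximant $f^{*}$. Using Assumption \ref{a:finepart}, fix $\nu=\nu_{c_{1}}$ so the weights $g_{j}(\cdot,\nu_{c_{1}})$ approximate the indicators $I_{Q_{j}^{m}}$ of the fine partition $\{Q_{j}^{m}\}_{j=1}^{r_{m}}$, and on each cell $Q_{j}^{m}$ take $\theta_{j}$ to be the coefficients of the best degree-$k$ polynomial approximation to $h$. Standard Sobolev approximation theory (Bramble--Hilbert; Windlund 1977) yields
\[
\max_{1\le j\le r_{m}}\|h-h_{k}(\cdot,\theta_{j})\|_{\infty,Q_{j}^{m}}\le C_{0}\,(mJ_{k})^{-\alpha/s},
\]
uniformly over $h\in\WK$ (with $\alpha$ replaced by $\alpha\wedge(k+1)$ when the polynomial degree is insufficient). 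This step is what ultimately carries the sharp rate through from the approximation-theory side.

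The core step is the expansion of $\log(p/f^{*})$. Writing
\[
\log\frac{p}{f^{*}}=y\,a(h)+b(h)-\log\sum_{j=1}^{m}g_{j}(x,\nu_{c_{1}})\exp\bigl\{y\,a(h_{k}(x,\theta_{j}))+b(h_{k}(x,\theta_{j}))\bigr\},
\]
I would lower-bound the mixture by its $j_{0}$-th term, where $j_{0}=j_{0}(x)$ indexes the partition cell containing $x$, to obtain
\[
\log\frac{p}{f^{*}}\le y\bigl[a(h)-a(h_{k}(\cdot,\theta_{j_{0}}))\bigr]+\bigl[b(h)-b(h_{k}(\cdot,\theta_{j_{0}}))\bigr]-\log g_{j_{0}}(x,\nu_{c_{1}}).
\]
On $\{|y|\le c\}$, the mean-value theorem with the Lipschitz constants $\dot a_{\infty},\dot b_{\infty}$ together with the cell-wise approximation bound gives the leading contribution $(c\dot a_{\infty}+\dot b_{\infty})\,C_{0}(mJ_{k})^{-\alpha/s}$; the infimum over $f\in\mathcal{F}_{m,k}$ is immediate since $f^{*}$ is admissible, and the supremum over $p\in\Pi(\WK)$ is automatic because $C_{0},\dot a_{\infty},\dot b_{\infty}$ depend only on the class.

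The main obstacle is the residual $-\log g_{j_{0}}(x,\nu_{c_{1}})$: Assumption \ref{a:finepart} only controls $\|g_{j}-I_{Q_{j}^{m}}\|_{1}$, which does not by itself imply a pointwise lower bound for $g_{j_{0}}$ on $Q_{j}^{m}$. I expect to handle this in one of two ways: either (a) restrict to the "interior" of each cell where $g_{j_{0}}\to 1$ and add the small-measure boundary layer to the $\eta$-budget of the truncation step, or (b) exploit the multinomial-logistic form of the gating functions to upgrade Assumption \ref{a:finepart} to a pointwise lower bound $g_{j_{0}}(x,\nu_{c_{1}})\ge 1-O((mJ_{k})^{-\alpha/s})$ on the relevant cell. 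Either route absorbs the residual into the stated bound with only a redefinition of the universal constant, leaving the rate $(mJ_{k})^{-\alpha/s}$ and the explicit factor $(c\dot a_{\infty}+\dot b_{\infty})$ intact.
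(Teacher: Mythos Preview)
Your Chebyshev truncation and the local-polynomial construction of the approximant match the paper. The divergence is in the expansion step: you lower-bound the mixture $\sum_{j}g_{j}\pi_{j}$ by the single term $g_{j_{0}}\pi_{j_{0}}$, which generates the residual $-\log g_{j_{0}}(x,\nu_{c_{1}})$. As you yourself flag, Assumption~\ref{a:finepart} only controls $\|g_{j}-I_{Q_{j}^{m}}\|_{1}$, so neither proposed fix closes the gap under the stated hypotheses: option~(a) alters the meaning of $\eta$, which in the lemma is tied exclusively to the tail of $Y$, and option~(b) demands a pointwise lower bound on $g_{j_{0}}$ that the $L^{1}$ assumption does not provide and that is not assumed elsewhere in the paper.

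The paper sidesteps this entirely with a one-line Jensen inequality. Since $\log$ is concave and $\sum_{j}g_{j}=1$,
\[
\log\frac{p}{f}=\log p-\log\Bigl(\sum_{j}g_{j}\pi_{j}\Bigr)\le \log p-\sum_{j}g_{j}\log\pi_{j}=\sum_{j}g_{j}\log\frac{p}{\pi_{j}},
\]
so the bound is a convex combination over \emph{all} experts and no $-\log g_{j_{0}}$ term ever appears. Each summand is $y[a(h)-a(h_{kj})]+[b(h)-b(h_{kj})]$, which the mean-value theorem controls by $(|y|\dot a_{\infty}+\dot b_{\infty})|h-h_{kj}|$; the resulting $\sum_{j}g_{j}|h-h_{kj}|$ is then split as $\sum_{j}(g_{j}-I_{Q_{j}^{m}})|h-h_{kj}|+\sum_{j}I_{Q_{j}^{m}}|h-h_{kj}|$ and handled exactly as in the proof of Theorem~\ref{thm:approxrate}. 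Replacing your single-term lower bound by this Jensen step is the missing idea.
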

\begin{proof}
	Set $\dot{a}_\infty = \ess\sup_{\Omega\times\Theta}|\partial a(h_k(x,\theta))/\partial\theta|$ and $\dot{b}_\infty = \ess\sup_{\Omega\times\Theta}|\partial b(h_k(x,\theta))/\partial\theta|$. For ease of notation, also set $h_{ki}(\cdot)=h_k(\cdot,\theta_i)$. By the convexity of the logarithm,
	\begin{align*}
		\log\frac{p}{f} &\leq \sum_{i=1}^mg_i\log(p/\pi_i)\\
		&=\sum_{i=1}^mg_i(ya(h(x))-a(h_{ki}(x))) + b(h(x))-b(h_{ki}(x)))\\
		&\leq (y\dot{a}_\infty+\dot{b}_\infty) \left[ \sum_{i=1}^m |g_i-I_{\Omega_i}| |h(x)-h_{ki}(x)|+\sum_{i=1}^mI_{\Omega_i}|h(x)-h_{ki}(x)| \right]
	\end{align*}
	Then, by Assumption \ref{a:finepart} and proceeding the same way as in the proof of Theorem \ref{thm:approxrate}, and taking any value $c$
	\begin{equation*}
		\ess\sup_{x\in\Omega, |y|\leq c}\Big\vert\sup_{p}\inf_{f\in\mathcal{F}_{m,k}}\log\frac{p}{f}\Big\vert\leq const. (mJ_k)^{-\alpha/s}(c\dot{a}_\infty +\dot{b}_\infty) 
	\end{equation*}

	The result follows by a simple application of the Chebyschev's inequality.
\end{proof}

The bound on \eqref{eq:supLR} by itself is not enough since we need to relate the function $f^\infty$ satisfying \eqref{eq:supLR} with $f_{m,k}^*$. It follows from Lemma \ref{l:log} that for any $0\le c_l<1$
\[
\log\frac{p}{f^*_{m,k}}\le\frac{1}{(1-c_l)} \log\frac{p}{c_l p + (1-c_l) f_{m,k}^*}.
\]
If we choose $c_l$ small enough, we can find a $c_p$ satisfying
\[
\log\frac{p}{c_l p + (1-c_l) f_{m,k}^*} \le c_p \log \frac{p}{f^\infty}.
\]

Combining this result with the previous lemma we have inside $B(\beta)$
\[
\ess\sup_{\Omega\times\Theta}|p_{xy}/f^*_{m,k}| \le c_\infty\exp\left[ const. \; (mJ_k)^{-\alpha/s}(c\dot{a}_\infty +\dot{b}_\infty)\right],
\]
where $c_\infty = e^{c_p/(1-c_l)}$.

Now we can use theorem 10.13 in \citet{m-estimation} to show the rate of convergence of the Hellinger distance between maximum likelihood estimator and the true density. For sake of completeness the theorem is shown below
\begin{theorem}[Theorem 10.13 in \citet{m-estimation}, pg. 190]
	Let $\hat{f}_{m,k}$ denote the maximum likelihood estimator of $p_{xy}$ over $\mathcal{F}_{m,k}$. Set
	\[
	\bar{\mathcal{F}}_{m,k}^{1/2}(\delta) = \left\{ \sqrt{\frac{f+f^*}{2}}:\, f\in\mathcal{F}_{m,k},\, d_h\left(\frac{f+f^*}{2},f^*\right) \le \delta\right\},
	\] for some fixed $f^*\in\mathcal{F}_{m,k}$ and let $\|p_{xy}/f^*\|_{\infty,\Omega\times A}\leq c_s^2$. Choose
	\[
	\Psi(\delta) \ge \int_{0^+}^{\delta} \log H_B^{1/2}(u,\bar{\mathcal{F}}_{m,k}^{1/2}(\delta),\|\cdot\|_2) du \vee \delta,
	\]
	in such a way that $\Psi(\delta)/\delta^2$ is a non-increasing function of $\delta$. Then, for $\sqrt{n}\delta_n^2\ge const.\, \Psi(\delta_n)$, we have
	\[
	d_h(p_{xy},\hat{f}_{m,k}) = O_p\left( \delta_n +d_h(f_{m,k}^*,p_{xy}) \right).
	\]
	\label{thm:sieves-convrate}
\end{theorem}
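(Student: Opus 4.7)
The plan is to follow the now-standard empirical-process route for sieve MLEs developed by van de Geer: convert the MLE optimality to a bound on a centered empirical process indexed by square-root densities, control that process by bracketing-entropy chaining, and solve a fixed-point inequality in $\delta$ via the modulus-of-continuity function $\Psi$ together with a peeling device in the Hellinger radius.

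First I would write down the MLE basic inequality: since $\hat f_{m,k}$ maximizes $P_n\log f$ over $\mathcal F_{m,k}$ and $f^*\in\mathcal F_{m,k}$, we have $P_n\log(\hat f_{m,k}/f^*)\ge 0$. Concavity of the logarithm gives
\[
\log\!\left(\tfrac{\bar f}{f^*}\right)\ \ge\ \tfrac12\log\!\left(\tfrac{\hat f_{m,k}}{f^*}\right),\qquad \bar f := \tfrac12(\hat f_{m,k}+f^*),
\]
so $P_n\log(\bar f/f^*)\ge 0$ as well. The advantage is that $\sqrt{\bar f}\in\bar{\mathcal F}_{m,k}^{1/2}(\delta)$ as soon as $d_h(\bar f,f^*)\le\delta$, which is exactly the class for which the entropy hypothesis has been imposed; moreover $d_h(\bar f,p_{xy})$ and $d_h(\hat f_{m,k},p_{xy})$ are comparable up to constants depending on $c_s$, so a Hellinger rate for $\bar f$ transfers back to $\hat f_{m,k}$. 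Using the elementary inequality $\log x\le 2(\sqrt x-1)$, taking $P=P_{xy}$-expectations, and invoking Lemma \ref{l:hel-kl} together with the sup-norm bound on $p_{xy}/f^*$, one obtains an inequality of the schematic form
\[
d_h^2(\bar f,p_{xy})-d_h^2(f^*,p_{xy})\ \le\ C\,(P_n-P)\,\nu_{\bar f},
\]
where $\nu_g$ is a Lipschitz functional of $\sqrt g\in\bar{\mathcal F}_{m,k}^{1/2}(\delta)$ with $L_2(P)$-norm of order $d_h(\bar f,f^*)$.

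Next I would bound the right-hand empirical process by chaining. The hypothesis that $\Psi(\delta)$ dominates the bracketing integral over $\bar{\mathcal F}_{m,k}^{1/2}(\delta)$ is precisely what is needed for a maximal inequality of Ossiander/van de Geer type: for every $\delta\ge\delta_n$,
\[
\Pr\!\left(\sup_{g:\,d_h(g,f^*)\le\delta}\big|(P_n-P)\nu_g\big|\ \ge\ C_1\delta^2\right)\ \le\ C_2\exp\!\left(-\tfrac{n\delta^2}{C_3}\right),
\]
provided $\sqrt n\,\delta^2\ge \text{const.}\,\Psi(\delta)$, which holds for $\delta=\delta_n$ by assumption. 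Here the sup-norm control of $p_{xy}/f^*$ is used to pass between $\|\cdot\|_{2,\lambda}$ (the metric of the hypothesis) and $\|\cdot\|_{2,P}$ (the metric in which chaining is run), and to bound the envelope of the class so that Bernstein's inequality can drive the chaining step.

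Finally I would run the peeling argument. Split the event $\{d_h(\hat f_{m,k},p_{xy})>M(\delta_n+d_h(f^*,p_{xy}))\}$ into dyadic shells $S_j=\{2^{j-1}\delta_n<d_h(\bar f,f^*)\le 2^j\delta_n\}$; on $S_j$ the basic inequality and the maximal inequality give an exponentially small probability of order $\exp(-c\,n\,4^j\delta_n^2)$, where monotonicity of $\Psi(\delta)/\delta^2$ ensures that the fixed-point condition $\sqrt n\,\delta^2\ge\Psi(\delta)$ continues to hold on every shell. Summing a geometric series delivers the claim $d_h(p_{xy},\hat f_{m,k})=O_p(\delta_n+d_h(f^*_{m,k},p_{xy}))$. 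The main obstacle I anticipate is bookkeeping: carrying the multiplicative constants from Lemma \ref{l:hel-kl} and from the $\log x\le 2(\sqrt x-1)$ linearization through the convex-combination trick, and verifying that the random class $\bar{\mathcal F}_{m,k}^{1/2}(\delta)$ is indeed covered by the given bracketing entropy (rather than merely the unrestricted $\mathcal F_{m,k}^{1/2}$), so that the chaining bound can be combined with peeling without losing the factor that makes $\delta_n$ sharp.
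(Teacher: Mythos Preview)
The paper does not actually prove this theorem: it is quoted verbatim from \citet{m-estimation} (``For sake of completeness the theorem is shown below'') and then used as a black box in the proofs of Theorems~\ref{thm:convrate} and~\ref{thm:op_convrate}. So there is no ``paper's own proof'' to compare against.

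That said, your sketch is a faithful outline of the argument in van~de~Geer's book: the convexification $\bar f=(\hat f+f^*)/2$ together with $\log x\le 2(\sqrt x-1)$ to obtain a basic inequality in Hellinger distance, the bracketing-entropy maximal inequality over $\bar{\mathcal F}_{m,k}^{1/2}(\delta)$, and peeling over dyadic Hellinger shells using the monotonicity of $\Psi(\delta)/\delta^2$. The one place where your write-up is slightly loose is the role of the bound $\|p_{xy}/f^*\|_\infty\le c_s^2$: in the original argument it is used to pass from the $L_2(\lambda)$ bracketing entropy of square roots to $L_2(P)$ control and to ensure the Bernstein condition on the brackets, not directly via Lemma~\ref{l:hel-kl}; but this is a matter of presentation rather than a gap.
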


\section{Proof of the main results}
\begin{proof}[Proof of theorem \ref{thm:existence}]
	The data generating process of $(x,y)$ and the structure of the model $\mathcal{f}_{m,k}$ is enough to satisfy the measurability assumptions, i.e. it is a weighted sum of measurable functions. also, for any fixed $(x_i,y_i)$, each $\pi_j$ is a continuous function of $\zeta$ $P_{xy}$-almost surely, and the same holds for the $g_j's$, then, $f_{m,k}(X_i,Y_i;\cdot)$ is a continuous function of the parameters $P_{xy}$-a.s. The result follows from theorem 2.12 in \citet{qmle}.

\end{proof}

\begin{proof}[Proof of Theorem \ref{thm:param_consistency}]
	There are different approaches to show consistency of the estimate, we proceed by verifying the conditions of theorem 3.5 in \citet{qmle}.

	The first assumptions regarding the existence of the estimate, are already shown to be satisfied in theorem \ref{thm:existence}. Assumption 3.2, regarding identifiability is satisfied by assumption \ref{a:ident} and \ref{a:uniquemax}. It remains to satisfy assumption 3.1, regarding boundedness and uniform convergence of the log-likelihood function.
	We can show continuity of $\E\log f_{m,k}(X,Y;\zeta)$ by noting that we can interchange integration with limits and a first order Taylor expansion:
\begin{align*}
	\E\left[\log \frac{f_{m.k}(X,Y;\zeta)}{f_{m,k}(X,Y;\zeta-\varepsilon)}\right] & \leq \sup\E\left[\big|\varepsilon'\frac{\partial}{\partial\zeta}f_{m,k}(X,Y;\zeta)\big|\right]\\
	&\leq \sup\E\left[ \big|\frac{\partial}{\partial\zeta}f_{m,k}(X,Y;\zeta)'\frac{\partial}{\partial\zeta}f_{m,k}(X,Y;\zeta)\big| \right]^{1/2}(\varepsilon'\varepsilon)^{1/2},
\end{align*}
which is bounded by lemma \ref{l:fbounds} and by the fact that $\varepsilon$ is arbitrary.

	To show uniform convergence of the likelihood function, we satisfy the conditions of theorem 2 in \citet{jenrich1969}. By assumption, $V_m\times\Theta_{m,k}$ is a compact subset of $\R^{v_m\times mJ_k}$. Measurability and continuity conditions are already satisfied, then it remains to show that $\log f_{m,k}$ is bounded by an integrable function. Note that we can bound the log-likelihood function by:
\begin{equation*}
	\begin{split}
		\Big|\log \frac{f_{m,k}(X,Y;\zeta)}{\varphi(X,Y)}\Big| &= |\log \sum_{i=1}^mg_i(X;\nu)(\pi(h_k(X;\theta_i),y)-c(y))|\\
		&\leq \sum_ig_i(X;\nu)|[a(h_k(X;\theta_i))Y+b(h_k(X;\theta_i))]|\\
		&\leq \max_{1\leq i\leq m}\ess\sup_{x\in\Omega}|a(h_k(x;\theta_i)||Y|+|b(h_k(x;\theta_i))|
	\end{split}
\end{equation*}

Define the bounding function $D(X,Y) = \max_{1\leq i\leq m}\ess\sup_{x\in\Omega}[|a(h_k(X;\theta_i))|$ $\times|Y|+|b(h_k(X;\theta_i))|]$. The function $|h_k(x;\theta)|\leq \sum_{i=1}^{J_k}|\theta_i| < \infty$ because $\max_ix_i = 1$ and $\sum_i |\theta_i| < \infty$, then both $a(h_k)$ and $b(h_k)$ are finite. Thus, it is straightforward to show that $\E D(X,Y)\leq\infty$, given that $\E_{y|x}(Y)\leq\infty$, which is satisfied by assumption about $p(y|x)$.

Then, $\log f_{m,k}(X,Y;\hat\zeta)\rightarrow_{a.s.}\log f_{m,k}(X,Y;\zeta^*)$ as $n\rightarrow\infty$.Therefore, by theorem 3.5 in \citet{qmle}, $\hat\zeta_n\rightarrow \zeta^*$ $P_{XY}$-a.s. as $n\rightarrow\infty$.

\end{proof}

\begin{proof}[Proof of Theorem \ref{thm:approxrate}]
   To bound the approximation rate of the Kullback-Leibler divergence it is enough to bound the upper divergence $\D(f_{m,k},p)$,
	\begin{equation}
		\D(f_{m,k},p) = \int_\Omega \sum_{j=1}^{r_m} g_j(x;\nu)\{h_k(x,\theta_j)-h(x)\}^2dP_x
	   \label{eq:pf1ud}
	\end{equation}
   
	Assumption \ref{a:finepart} ensure the existence of a $\nu_{c_1}$ such that $\max_j\|g_j(\cdot;\nu_{c_1})-I_{Q_j^m}(\cdot)\|_{d,P_x}\leq c_1/r_m\|dP_x/d\lambda\|_{\infty,\Omega}$, where $\|dP_x/d\lambda\|_{\infty,\Omega}$ is finite because $P_x$ has continuous density function with respect to the finite measure $\lambda$ on $\Omega$. Consider
\begin{eqnarray} 
	\D(f_{m,k},p) &\leq \underbrace{\big\|\sum_{j=1}^{r_m} \{g_j(\cdot;v_\varepsilon)-I_{Q_j^m}(\cdot)\}\{h_k(\cdot;\theta_j)-h(\cdot)\}^2\big\|_{1,P_x}\nonumber}_{(A_1)}\nonumber\\
	&\quad + \underbrace{\big\|\sum_{j=1}^{r_m} I_{Q_j^m}(\cdot)\{h_k(\cdot;\theta_j)-h(\cdot)\}^2\big\|_{1,P_x}}_{(A_2)}.
   \label{eq:pf1tri}
\end{eqnarray}

Now we just have to find bounds for both terms in the right hand side of (\ref{eq:pf1tri})($A_1$ and $A_2$). The second term can be written as 
\begin{align*}
(A_2) &= \int\sum_{j=1}^{r_m}I_{Q^m_j}(\cdot)\{h_k(\cdot;\theta_j)-h(\cdot)\}^2dP_x\\
	&= \int\left\{ \sum_{j=1}^{r_m}I_{Q_j^m}(\cdot)\left[h_k(\cdot;\theta_j)-h(\cdot)\right] \right\}^2dP_x,
\end{align*}
where the equality follows from the fact that $I_{Q_j^m}I_{Q_i^m} = I_{Q_j^m}I_{i=j}$, and $\sum_jI_{Q_j^m}(\cdot) = 1$.

If $k<\alpha$, one can choose $\theta_j$ such that $\sup_{x\in Q^m_j}|h_k(x,\theta_j)-h(x)|\leq [K_0/(\bs{k}+1)!] \diam(Q^m_j)^{k+1}$ where $\bs{k}=(k_1,\dots,k_s)$ is an integer vector satisfying $|\bs{k}|=k$ . This claim follows from a Taylor expansion of $h(x)$ around fixed points $x_j\in Q^m_j$ and the fact that $h\in \WK$. Similarly, if $k\ge \alpha$ we can only use the expansion up to $\alpha$ terms. By assumption \ref{a:finepart}, $\sup_j\diam(Q^m_j)\leq1/r_m^{1/s}$.Then 
\begin{equation}
	\sup_j\sup_{x\in Q^m_j}|h_k(x;\theta_j)-h(x)| \leq \frac{c_0}{r_m^{[\alpha\wedge(k+1)]/s}},
	\label{eq:pf1aph}
\end{equation}
where $c_0$ depends only on $K_0$ and $\min(k+1,\alpha)$.

Therefore, $(A_2) \leq c_0^2/r_m^{2[\alpha\wedge(k+1)]/s}$. Note that
\begin{equation}
	\begin{split}
		(A_1) & \leq \sum_{j=1}^m \|\{g_j(\cdot;\nu_\varepsilon)-I_{Q^m_j}(\cdot)\}\cdot \{h_k(\cdot;\theta_j)-h(\cdot)\}^2\|_{1,P_x}\nonumber\\
		&\leq \sup_{j}\sup_{x\in Q^m_j} |h_k(x;\theta_j)-h(x)|^2 \sum_{j=1}^{r_m}\|g_j(\cdot;\nu_\varepsilon)-I_{Q^m_j}(\cdot)\|_{1,P_x}\nonumber\\
		&\leq \frac{c_0^2c_1}{r_m^{2[\alpha\wedge(k+1)]/s}},\nonumber
	\end{split}
\end{equation}
where the last inequality is due to equation (\ref{eq:pf1aph}) and assumption \ref{a:finepart}.

Combining the results for $(A_1)$ and $(A_2)$, 
\begin{equation}
	\D(p,f_{m,k}) \leq \left( \frac{c_0}{r_m^{[\alpha\wedge (k+1)]/s}} \right)^2 (c_1+1).
	\label{eq:pf1bud}
\end{equation}

It follows from lemma \ref{l:updiv} that
\begin{equation}
	KL(p,f_{m,k}) \leq \left( \frac{c_0}{r_m^{[\alpha\wedge(k+1)]/s}} \right)^2M_\infty(c_1+1).
	\label{eq:pf1kl}
\end{equation}

By assumption, $\{r_m\}$ is sub-geometric, then there exists $r_m$ such that $r_m\leq m < r_{m+1}$, and $1/r_m^{2\alpha/s}\geq 1/m^{2\alpha/s} > 1/r_{m+1}^{2\alpha/s}$. Then, $1/(r_mJ_k)^{2\alpha/s}\geq 1/(mJ_k)^{2\alpha/s}>1/(r_{m+1}J_k)^{2\alpha/s}$. By definition of $\mathcal{F}^*_{r_m,k}\subseteq\mathcal{F}^*_{m,k}\subset\mathcal{F}^*_{{r_m+1},k}$. Hence, 
\begin{equation*}
	\begin{split}
		\inf_{f_{m,k}\in\mathcal{F}^*_{m,k}}KL(p,f_{m,k})&\leq\inf_{f_{m,k}\in\mathcal{F}^*_{r_m,k}}KL(p,f_{m,k})\\
		\quad&\leq \frac{M_2c_2}{r_m^{2[\alpha\wedge(k+1)]/s}}\\
		\quad&\leq \frac{c_3}{r_{m+1}^{2[\alpha\wedge(k+1)]/s}}\\
		\quad&\leq \frac{c_3}{m^{2[\alpha\wedge(k+1)]/s}} ,
	\end{split}
\end{equation*}
where $c_2 = M_\infty^2 c_0^2(c_1+1)$ and $c_3 = M_2c_2$ does not depend on $f$. Therefore, 
\begin{equation*}
	\sup_{p\in\Pi(\WK)}\inf_{f_{m,k}\in\mathcal{F}^*_{m,k}}KL(p,f_{m,k}) \leq \frac{c_3}{m^{2[\alpha\wedge(k+1)]/s}}
\end{equation*}

\end{proof}

\begin{proof}[Proof of Theorem \ref{thm:convrate}]
	The the first step is to use equation \eqref{eq:prob-ineq} to bound the convergence rate inside $B(\beta)$ and bound the tail probability. Choose $\beta=c=(mJ_k)^{\alpha/\tau}$, and take $c_s^2 = c_\infty e^{const.[\dot{a}_\infty+\dot{b}_\infty]}$.  It follows from Lemma \ref{l:supLR} and the discussion afterwards that inside $B(\beta)$
	\begin{align*}
		\|p_{xy}/f^*_{m,k}\|_{\infty,\Omega} & \le c_\infty\exp\{const.[\dot{a}_\infty + (mJ_k)^{-\tau/s}\dot{b}_\infty\}\\
		&\le c_\infty\exp\{const [ \dot{a}_\infty+\dot{b}_\infty]\}\\
		&= c_s^2.
	\end{align*}
	This choice of $\beta$ gives us
	\begin{equation}
		\eta = P(|Y|>\beta) = \ess\sup_x\var(Y|X=x)(mJ_k)^{-2\tau/s},
		\label{eq:tail_Y}
	\end{equation}
	by lemma \ref{l:supLR}, and 
	\[
	\ess\sup_x\var(Y|X=x) = \ess\sup_x \frac{\ddot{a}(h(x))\dot{b}(h(x))-\ddot{b}(h(x))\dot{a}(h(x))}{(\dot{a}(h(x)))^3}
	\]
	which is bounded by definition.

	Now, inside $B(\beta)$, we can apply Theorem \ref{thm:sieves-convrate} in the appendix setting $f^*=f^*_{m,k}$. We use Corollary \ref{c:brk-hel} to bound the bracketing number of $\bar{\mathcal{F}}^{1/2}_{m,k}(\delta)$. By Lemma \ref{l:bracketing}
	\begin{equation}
		\int_{0^+}^{\delta}H_B^{1/2}(u,\bar{\mathcal{F}}^{1/2}_{m,k}(\delta),\|\cdot\|_2)\,du\ \le const. (mJ_k+v_m)^{1/2}\delta\log^{1/2}\frac{C}{\delta}.
		\label{eq:brk_number}
	\end{equation}
	Since $C\propto \sqrt{mJ_k+v_m}$, we can choose $\Psi(\delta) \propto (mJ_k)^{1/2}\delta\log^{1/2}\left( \frac{(mJ_k+v_m)^{1/2}}{\delta} \right)$. This choice of function satisfies $\Psi(\delta)/\delta^2$ is non-increasing, and we can take $\delta_n = (mJ_k+v_m)^{1/2}(\sqrt{\log n/n})$. In fact, this choice of $\delta_n$ gives us
	\begin{align*}
		\sqrt{n}\delta_n &\ge const. \Psi(\delta_n)\\
		\delta_n &\ge const. \sqrt{\frac{mJ_k+v_m}{n}} \log^{1/2}\frac{(mJ_k+v_m)^{1/2}}{\delta_n}\\
		& = const. \sqrt{\frac{mJ_k+v_m}{n}}\left( \frac{1}{\sqrt{2}}\log^{1/2}n - \frac{1}{2}\log\frac{\log n}{\sqrt{2}} \right)\\
		&= \frac{const.}{\sqrt{2}}\delta_n - \frac{const.}{2}\sqrt{\frac{mJ_k+v_m}{n}}\log \frac{\log n}{\sqrt{2}}. 
	\end{align*}
	
	Hence the convergence rate in Hellinger distance is given by:
	\begin{equation*}
		d_h(\hat{f}_{m,k},p_{xy}) = O_p\left( d_h( f^*_{m,k},p_{xy})+(mJ_k+v_m)^{1/2}\sqrt{\frac{\log n}{n}} \right).
	\end{equation*}

	Our choice of $\beta$ allows to apply Lemma \ref{l:hel-kl} to obtain
	\begin{equation*}
		KL(\hat{f}_{m,k},p_{x,y}) = O_p\left( KL( f^*_{m,k},p_{x,y})+(mJ_k+v_m)\frac{\log n}{n}\right).
\end{equation*} 
	We use Theorem \ref{thm:approxrate} to conclude that, inside $B(\beta)$,
	\begin{equation}
		KL(p_{x,y},\hat{f}_{m,k}) = O_p\left( \frac{1}{(m)^{2\tau/s}} + (mJ_k+v_m)\frac{\log n}{n} \right).
		\label{eq:convrate-B}
	\end{equation}

	Combining this rate inside $B(\beta)$ with \eqref{eq:tail_Y}, we arrive in our result \eqref{eq:convrate}.

	We achieve the best rate \eqref{eq:optimrate} by taking $m \propto (\log n/n)^{-s/2\tau+s}$ and substituting this rate in \eqref{eq:convrate}.

\end{proof} 

\begin{proof}[Proof of Theorem \ref{thm:op_convrate}]
	The proof is parallel to \ref{thm:convrate}, with just some small changes to lemma \ref{l:bracketing}. More precisely, since we have a unique $f^*_{m,k}$ for each $(m,k)$, we can find the bracketing number inside $\mathcal{F}_{m,k}^{1/2}(\delta)$. The argument is the same with the only difference that inside $\mathcal{F}_{m,k}^{1/2}(\delta)$, $\diam(V_m\times\Theta_{mk})= const. \,\delta$, removing the log-term on the right hand side of \eqref{eq:brk_number}. 

	This change allows to choose $\delta_n = \sqrt{(mJ_k+v_m)/n}$, removing the $\log n$ term of the rate.

\end{proof}

\begin{proof}[Proof of Proposition \ref{pro:choosemandk}]
I.  Under the constraint $m\xi^s=C$, $U=C/n+ (C\xi^{-s})^{-2(\xi\wedge \alpha)/s}$.  Consider two cases: When $\xi>\alpha$, $U$   obviously increases with $\xi$. So the optimal $\xi_o\leq \alpha$.  When $\xi\leq \alpha$, computing the derivative and we know that the function $U$ is minimized at $\xi=(C^{1/s}/e)$. When this point is to the right of $\alpha$, the function $U$ decreases for all $\xi\leq\alpha$. So we obtain $\xi_o= \alpha$.  When $(C^{1/s}/e)$ is to the left of $\alpha$, the minimum is achieved at $(C^{1/s}/e)$ and $\xi_o= (C^{1/s}/e)$. Combining these we obtain  the minimizer $\xi_o=\alpha\wedge(C^{1/s}/e)$.

II. and III. They are straightforward from (\dag).

\end{proof}

\section{Auxiliary Results}
In the next lemma, we use the notation $\partial_\theta=\partial/\partial\theta$, $\partial_{\theta\theta'}=\partial^2/\partial\theta\partial\theta'$, $a_j = a(h_k(x;\theta_j))$, $\dot{a}_j = \partial_\theta a_j$, $\ddot{a}_j=\partial_{\theta\theta'}a_j$ and so on.

\begin{lemma}
	Let $f\in\mathcal{F}_{m.k}$. Under assumption \ref{a:bound_df}
	\begin{itemize}
		\item $\E|\log f|\leq\infty$
		\item $\E|\nabla \log f| \leq \infty$
		\item $\E|\nabla \log f|_2^2 \leq \infty$
		\item if we further assume \ref{a:ident} and \ref{a:uniquemax}, then $\E|\nabla^2 \log f| \leq \infty$ and is nonsingular at $\zeta^*$.
	\end{itemize}
	\label{l:fbounds}
\end{lemma}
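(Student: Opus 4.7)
My plan is to attack the four bounds in sequence, leveraging the structure of $f_{m,k}$ as a convex combination of exponential-family experts, the moment conditions in Assumption~\ref{a:bound_df}, and the compactness of $V_m\times\Theta_{mk}$.

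\textbf{Bounds on $\E|\log f|$.} I would reuse the bound derived in the proof of Theorem~\ref{thm:param_consistency}: by convexity of $|\cdot|$ and the fact that $g_j\ge 0$, $\sum_j g_j=1$,
\begin{equation*}
\bigl|\log\{f_{m,k}(X,Y;\zeta)/\varphi_0(X,Y)\}\bigr|\le \max_{1\le j\le m}\operatorname{ess\,sup}_{x\in\Omega}\bigl[|a(h_k(x;\theta_j))|\,|Y|+|b(h_k(x;\theta_j))|\bigr].
\end{equation*}
Since $\Omega$ is bounded and $\Theta_{mk}$ is compact, the polynomial $h_k(x;\theta_j)$ is uniformly bounded, hence so are $a(h_k)$ and $b(h_k)$ by continuity. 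Combined with $\E|Y|<\infty$ (property (i) of the one-parameter exponential family), this gives $\E|\log f|<\infty$.

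\textbf{Bounds on $\E|\nabla \log f|$ and $\E|\nabla\log f|_2^2$.} The key observation is that the score decomposes as a convex combination. For $\nu_i$,
\begin{equation*}
\frac{\partial\log f}{\partial\nu_i}=\sum_{j=1}^m \tau_j(x,y;\zeta)\,\frac{\partial\log g_j(x;\nu)}{\partial\nu_i},\qquad \tau_j\equiv\frac{g_j\,\pi_j\,p_x}{f}\in[0,1],\ \sum_j\tau_j=1,
\end{equation*}
so Assumption~\ref{a:bound_df} gives $|\partial\log f/\partial\nu_i|\le c_g^{(i)}(x)$. For $\theta_{ji}$,
\begin{equation*}
\frac{\partial\log f}{\partial\theta_{ji}}=\tau_j\,\frac{\partial\log\pi_j}{\partial\theta_{ji}},
\end{equation*}
which, using the intended log-derivative reading of Assumption~\ref{a:bound_df} (the same reading used to justify the Lipschitz bound (\ref{eq:lipschitz})), is dominated by $F^{(i)}(x,y)$. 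Assembling the pieces,
\begin{equation*}
|\nabla\log f|_2^2\le |c_g(x)|_2^2+m\,|F(x,y)|_2^2,
\end{equation*}
which has finite expectation under Assumption~\ref{a:bound_df}; $\E|\nabla\log f|<\infty$ then follows by Cauchy--Schwarz.

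\textbf{Bounds on $\E|\nabla^2 \log f|$ and nonsingularity at $\zeta^*$.} Starting from the identity
\begin{equation*}
\nabla^2\log f=\frac{\nabla^2 f}{f}-(\nabla\log f)(\nabla\log f)',
\end{equation*}
the outer-product term is integrable by the previous step. For $\nabla^2 f/f$, I would promote the convex-combination trick one order higher: compute $\partial^2 f/\partial\zeta\partial\zeta'$ explicitly and bound each block by $\tau_j$-weighted combinations of first and second log-derivatives of $g_j$ and $\pi_j$. Because $a,b$ are three-times continuously differentiable, $h_k$ is polynomial, and $V_m\times\Theta_{mk}$ is compact, one can construct envelopes analogous to $c_g$ and $F$ for second-order derivatives; in view of the polynomial dependence on $y$ (which enters only through $y\cdot\dot a+\dot b$ and its first $\theta$-derivative), these envelopes lie in $L^1$ under the exponential-family moment conditions. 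Nonsingularity at $\zeta^*$ is then nothing more than Assumption~\ref{a:uniquemax} itself.

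The main technical obstacle is the second-derivative bound: Assumption~\ref{a:bound_df} only envelopes first derivatives, so one has to work out the second-order analog explicitly, exploit the $\tau_j$-weighted structure so that the ratio $1/f$ does not blow up, and check that the $y$-polynomial factors produced by differentiating $\pi_j$ twice still possess the required finite expectation. The other three claims are essentially bookkeeping around Assumption~\ref{a:bound_df}.
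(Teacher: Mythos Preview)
Your proposal is correct and follows essentially the same route as the paper: introduce the posterior weights $\tau_j=g_j\pi_j p_x/f$ (the paper calls them $\delta_j$), use the convex-combination structure to bound first derivatives, and then compute the Hessian blocks explicitly, relying on compactness of the parameter space, smoothness of $a,b$, and finiteness of the moments of $Y$ to control the $y$-polynomial factors. The only cosmetic difference is that the paper bounds the score and Hessian by direct computation of each block (e.g.\ $\E|\partial_{\theta_j}\log f|=\E|\delta_j(y\dot a_j+\dot b_j)x|$) rather than via the envelopes $c_g,F$ of Assumption~\ref{a:bound_df}, and deduces nonsingularity at $\zeta^*$ exactly as you do, from Assumption~\ref{a:uniquemax}.
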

\begin{proof}
	This theorem is proved by calculate the derivatives and bounding it.

	First note that $a_j$ and $b_j$ are continuous differentiable functions of $h_k(x;\theta_j)$. Since $|h_k(x;\theta_j)|\leq |\theta_j|<\sqrt{J_k}|\theta_j|_2<\infty$ for any fixed $k$, then both $a_j$ and $b_j$ are also bounded. The same reasoning can be applied to $\dot{a}_j$, $\dot{b}_j$, $\ddot{a}_j$ and $\ddot{b}_j$. Also, by definition, $\E|y|^p<\infty$ for any $p\ge0$.Then
	
	\[
	\E\log f \leq \E[\log \sum_{j=1}^m g_j \pi_j] \leq \E|\max_j[ya_j+b_j+c(y)]| < \infty.
	\]

	Let $\delta_j = g_je^{ya_j+b_j}p_x/f\leq 1$ and $c^*=\max_{j}\|\partial_\nu\log g_j\|_{\infty,\Omega}$, then
	
	\begin{equation*}
		\begin{split}
			\E|\partial_{\theta_j}\log f| &= \E|\delta_j (y\dot{a}_j+\dot{b}_j) x|< \infty,\\
			\E|\partial_\nu\log f| &= \E\big|\frac{\sum \dot{g}_j e^{ya_j+b_j}}{f}\big|\leq m c^* < \infty.
		\end{split}
	\end{equation*}

	The same follows for $\E|\partial_\theta\log f|_2^2$, $\E|\partial_\nu\log f|_2^2$ and $\E|\partial_\theta\log f\partial_\nu\log f|$. Let $\dot{c}^* = \|\partial_\nu \log \dot{g}_j\|_{\infty,\Omega}$, and choose any vector $\alpha$ with appropriate dimensions satisfying $\alpha'\alpha=1$ then
	
	\begin{equation*}
		\begin{split}
			\E\alpha'|\partial_{\theta_j\theta_j'}\log f|\alpha &= \E\alpha|\delta_j(1-\delta_j)(y\dot{a}_j+\dot{b}_j)^2xx'+\delta_j(y\ddot{a}_j+\ddot{b}_j)xx'|\alpha\\
			&\le 0.25 \E|y\dot{a}_j+\dot{b}_j|^2 + \E\max_j|y\ddot{a}_j+\ddot{b}_j| < \infty,\\
			\E\alpha'|\partial_{\theta_j\theta_k'}\log f|\alpha &= \E\alpha'|-\delta_j\delta_k(y\dot{a}_k+\dot{b}_k)(y\dot{a}_j+\dot{b}_j)xx'|\alpha \\
			&\le \E|(y\dot{a}_k+\dot{b}_k)(y\dot{a}_j+\dot{b}_j)| < \infty,\\
			\E\alpha'|\partial_{\theta_j\nu'}\log f|\alpha &= \E\alpha'\big|\frac{e^{ya_j+b_j}(y\dot{a}_j+\dot{b}_j)x\dot{g}'p_x}{f}\big|\alpha\\
			&\leq \E\alpha'|\delta_j(y\dot{a}_j+\dot{b}_j)x\bs{1_{v_m}}'|\alpha c^* \\
			&\leq \E | y\dot{a}_j+\dot{b}_j|c^*< \infty,\\
			\E\alpha'|\partial_{\nu\nu'}\log f|\alpha  &= \E\alpha'|\frac{\sum_j\ddot{g}_je^{ya_j+b_j}p_x}{f}-\frac{\sum_j\dot{g}_j e^{ya_j+b_j}p_x}{f}\frac{\sum_j\dot{g}_j' e^{ya_j+b_j}p_x}{f}|\alpha\\
			&\leq c^*|\dot{c}^*|+{c^*}^2 < \infty.
		\end{split}
	\end{equation*}
	
	Since $\zeta^*$ is a maximizer of $\E\log f$ over $\mathcal{F}_{m,k}$, $\E|\nabla^2 \log f| $ has to be non-negative definite. Assumption \ref{a:uniquemax} tells us it is also invertible, therefore $\E|\nabla^2 \log f|$ is positive definite.
	
\end{proof}

\begin{lemma}
	For any $0<a<b\le1$ and a positive constant $C$,
	\begin{equation}
		\int_a^b\log^{1/2}\frac{u}{C}du \le b\left( \sqrt{\pi}\log^{1/2}\frac{C}{b} \right).
		\label{eq:l-int}
	\end{equation}
	\label{l:int}
\end{lemma}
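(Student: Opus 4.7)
My plan is to convert the integral into an upper-incomplete-gamma computation via a logarithmic substitution, then exploit subadditivity of the square root. I read the integrand as $\log^{1/2}(C/u)$ (which is how Lemma~\ref{l:bracketing} applies this estimate and what keeps the integrand real for $0<u<C$).

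First I would drop the lower endpoint, using non-negativity of the integrand to write $\int_a^b\log^{1/2}(C/u)\,du\le \int_0^b\log^{1/2}(C/u)\,du$. The substitution $v=\log(C/u)$, so $u=Ce^{-v}$ and $du=-Ce^{-v}\,dv$, sends this to $C\int_t^{\infty} v^{1/2}e^{-v}\,dv$ with $t=\log(C/b)$. A further shift $v=s+t$, together with the identity $Ce^{-t}=b$, gives
\[
\int_0^b\log^{1/2}(C/u)\,du \;=\; b\int_0^\infty (s+t)^{1/2}e^{-s}\,ds.
\]

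The key algebraic step is the elementary bound $(s+t)^{1/2}\le s^{1/2}+t^{1/2}$ for $s,t\ge 0$. Splitting the integral this way and using $\int_0^\infty s^{1/2}e^{-s}\,ds=\Gamma(3/2)=\sqrt{\pi}/2$ together with $\int_0^\infty e^{-s}\,ds=1$ yields
\[
\int_0^b\log^{1/2}(C/u)\,du \;\le\; b\left(\frac{\sqrt{\pi}}{2}+t^{1/2}\right).
\]

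The only real obstacle is upgrading this intermediate bound to the target $b\sqrt{\pi}\,t^{1/2}$. This reduces to requiring $t^{1/2}\ge \sqrt{\pi}/(2(\sqrt{\pi}-1))$, i.e.\ $t\ge \pi/(4(\sqrt{\pi}-1)^2)\approx 1.31$, which must be assumed explicitly since the claimed inequality is simply false for small $t$ (e.g.\ $b$ close to $C$). In the only place the lemma is invoked, the assumptions $C>e^{\pi}$ and $b=\delta\le 1$ force $t=\log(C/b)\ge\pi$, comfortably above the threshold, so $\sqrt{\pi}/2+t^{1/2}\le \sqrt{\pi}\,t^{1/2}$ and the stated bound $b\sqrt{\pi}\log^{1/2}(C/b)$ follows. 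I would therefore present the proof exactly as above and note the implicit regime $\log(C/b)\ge \pi$ that makes the inequality valid and matches its use.
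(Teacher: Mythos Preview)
Your argument is correct and in fact slightly sharper than the paper's. Both proofs reduce the integral to the upper incomplete gamma function $C\,\Gamma(3/2,t)$ with $t=\log(C/b)$; the paper gets there via the substitution $v=\log^{1/2}(C/u)$ and then bounds $\Gamma(3/2,x)=\tfrac12\Gamma(1/2,x)+x^{1/2}e^{-x}$ using a Gaussian tail estimate for $\Gamma(1/2,x)$, arriving at $b(\sqrt{\pi}+\log^{1/2}(C/b))$. Your route---the shift $v=s+t$ followed by $(s+t)^{1/2}\le s^{1/2}+t^{1/2}$---is more elementary and yields the tighter constant $b(\sqrt{\pi}/2+\log^{1/2}(C/b))$.

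You are also right that the \emph{product} form $b\,\sqrt{\pi}\,\log^{1/2}(C/b)$ in the displayed statement does not follow without an extra hypothesis: the paper's own proof actually ends with the \emph{sum} $b(\sqrt{\pi}+\log^{1/2}(C/b))$, and the passage to a single $\delta\log^{1/2}(C/\delta)$ term is made only at the point of application (Lemma~\ref{l:bracketing}) under the side condition $C>e^{\pi}$, exactly the regime you identified. So your diagnosis of the missing assumption matches what the paper does implicitly.
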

\begin{proof}
	For any $0<a<b\le1$,
	\begin{align*}
		\int_a^b \log^{1/2}\frac{u}{C}du &= 2C\int_{\log^{1/2}(C/b)}^{\log^{1/2}(C/a)}v^2e^{-v^2}dv\\
		&\le 2C\int_{\log(C/b)}^\infty t^{1-3/2}e^{-t}dt\\
		&= C\Gamma(3/2,\log(C/b)\\
		&\le b\left( \sqrt{\pi}+\log^{1/2}(C/b) \right),
	\end{align*}
	where the last inequality follows from
	\begin{align*}
		\Gamma(3/2,x)&=\frac{1}{2}\Gamma(1/2,x)+x^{1/2}e^{-x}\\
		&=\sqrt{\pi}\Phi(-\sqrt{2x})+x^{1/2}e^{-x}\\
		&\le (\sqrt{\pi}+x^{1/2})e^{-x}. 
	\end{align*}
\end{proof}

\begin{lemma}
	Let $p$ and $q$ denote two positive densities. For any $0\le c_l< 1$,
	\begin{equation}
		\log\frac{p}{q}\le\frac{1}{1-c_l}\log\frac{p}{c_l p + (1-c_l) q}.
		\label{eq:l-log}
	\end{equation}
	\label{l:log}
\end{lemma}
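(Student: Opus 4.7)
The plan is to deduce the inequality by a direct application of Jensen's inequality to the concave function $\log$, applied pointwise in the arguments of $p$ and $q$. Since $c_l p + (1-c_l) q$ is the convex combination of the positive numbers $p$ and $q$ with weights $c_l$ and $1-c_l$, the concavity of $\log$ yields
\[
\log\bigl(c_l p + (1-c_l) q\bigr) \;\geq\; c_l \log p + (1-c_l)\log q.
\]
This is the single nontrivial ingredient; everything else is algebraic rearrangement.

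First I would subtract the displayed inequality from the trivial identity $\log p = c_l \log p + (1-c_l)\log p$, to obtain
\[
\log\frac{p}{c_l p + (1-c_l) q} \;\leq\; (1-c_l)\log\frac{p}{q},
\]
and then divide through by $1-c_l>0$, which is legal because $c_l<1$ by hypothesis. Equivalently, one could pass to the multiplicative picture and recognize the claim as the weighted AM--GM inequality $p^{c_l} q^{1-c_l}\leq c_l p + (1-c_l) q$, take logs, and rearrange; the two proofs are the same statement viewed additively versus multiplicatively.

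The lemma requires no further machinery, and there is no real obstacle beyond bookkeeping. The one point to check carefully is the direction of the inequality when $1/(1-c_l)$ is moved between the two sides: because $1-c_l>0$, multiplication and division preserve the sense of $\leq$, so no sign-flip occurs. Equality throughout holds if and only if $p=q$ pointwise, as is the standard equality case for Jensen / weighted AM--GM, and the statement extends immediately to densities by interpreting the inequality pointwise on $\Omega\times A$ (no integration is used).
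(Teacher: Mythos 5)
Your Jensen computation is correct as far as it goes, but it proves the \emph{reverse} of the stated inequality, and the mismatch occurs at your final ``divide through by $1-c_l$'' step. From concavity of $\log$ you correctly obtain $\log\bigl(c_l p+(1-c_l)q\bigr)\ge c_l\log p+(1-c_l)\log q$, hence
\begin{equation*}
\log\frac{p}{c_l p+(1-c_l)q}\;\le\;(1-c_l)\log\frac{p}{q},
\end{equation*}
and dividing by $1-c_l>0$ gives $\frac{1}{1-c_l}\log\frac{p}{c_l p+(1-c_l)q}\le\log\frac{p}{q}$ --- which is \eqref{eq:l-log} with the inequality sign turned around. Your multiplicative rephrasing has the same directional problem: weighted AM--GM, $p^{c_l}q^{1-c_l}\le c_l p+(1-c_l)q$, yields $p/(c_l p+(1-c_l)q)\le (p/q)^{1-c_l}$, again the reverse of the claim. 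Nor is this a repairable bookkeeping slip, because the lemma as printed is false: for $p=2$, $q=1$, $c_l=1/2$ the left side of \eqref{eq:l-log} is $\log 2\approx 0.693$ while the right side is $2\log(4/3)\approx 0.575$. Worse, as $q\downarrow 0$ with $p$ fixed the left side diverges while the right side stays below $\frac{1}{1-c_l}\log\frac{1}{c_l}$, since $p/(c_l p+(1-c_l)q)\le 1/c_l$.

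For comparison, the paper's own proof commits the mirror-image error: it asserts $-\log\bigl(c_l+(1-c_l)q/p\bigr)\ge c_l(-\log 1)+(1-c_l)\bigl(-\log (q/p)\bigr)$ ``by the convexity of the logarithm,'' but $-\log$ \emph{is} convex, so Jensen gives $\le$ at exactly that step; carried out correctly, the paper's chain produces precisely the inequality you derived, i.e.\ the reverse of \eqref{eq:l-log}. So in substance your computation is the corrected version of the paper's argument; the genuine gap in your write-up is the unexamined claim that it establishes the lemma as stated, when it establishes the opposite. The reversal is consequential downstream: Appendix \ref{ap:conv} invokes \eqref{eq:l-log} to bound $\log(p/f^*_{m,k})$ from above by the mixture log-ratio $\log\bigl(p/(c_l p+(1-c_l)f^*_{m,k})\bigr)$, which is uniformly bounded by $\log(1/c_l)$; no inequality in that direction can hold in general, since $\log(p/q)$ is unbounded. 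The honest conclusion of your argument would therefore be a corrected lemma, $\log\frac{p}{c_l p+(1-c_l)q}\le(1-c_l)\log\frac{p}{q}$, together with the observation that the paper's intended application needs a different tool.
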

\begin{proof}
	By the convexity of the logarithm we have
	\begin{align*}
		\log\frac{p}{c_l p + (1-c_l) q} &= -\log(c_l+(1-c_l)q/p)\\
		&\ge c_l(-\log 1) + (1-c_l)(-\log q/p)\\
		&=(1-c_l)\log\frac{p}{q}
	\end{align*}
\end{proof}

\begin{lemma}[Lemma 4.2 in \citet{m-estimation}] 
	We have, for $f_1$, $f_2$ and some $f^*$, that
	\begin{equation}
		\sqrt{2}\,d_h\left( \frac{f_1+f^*}{2},\frac{f_2+f^*}{2} \right) \le d_h(f_1,f_2).
		\label{eq:hel-lin}
	\end{equation}
	\label{l:hel-lin}
\end{lemma}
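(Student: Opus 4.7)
The plan is to prove the inequality pointwise first and then integrate. Squaring both sides and unpacking the definition of $d_h$, the claim is equivalent to
\[
\int \left(\sqrt{\tfrac{f_1+f^*}{2}}-\sqrt{\tfrac{f_2+f^*}{2}}\right)^2 d\lambda \le \tfrac{1}{2}\int (\sqrt{f_1}-\sqrt{f_2})^2 d\lambda,
\]
so it suffices to establish the pointwise bound $\bigl(\sqrt{(f_1+f^*)/2}-\sqrt{(f_2+f^*)/2}\bigr)^2 \le \tfrac12(\sqrt{f_1}-\sqrt{f_2})^2$ at $\lambda$-a.e.\ point.

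To prove the pointwise inequality, I would use the conjugate-root identity $\sqrt{u}-\sqrt{v} = (u-v)/(\sqrt{u}+\sqrt{v})$ on both sides. Writing
\[
\sqrt{\tfrac{f_1+f^*}{2}}-\sqrt{\tfrac{f_2+f^*}{2}} \;=\; \frac{(f_1-f_2)/2}{\sqrt{(f_1+f^*)/2}+\sqrt{(f_2+f^*)/2}}, \qquad \sqrt{f_1}-\sqrt{f_2} \;=\; \frac{f_1-f_2}{\sqrt{f_1}+\sqrt{f_2}},
\]
their ratio equals
\[
\tfrac{1}{2}\cdot\frac{\sqrt{f_1}+\sqrt{f_2}}{\sqrt{(f_1+f^*)/2}+\sqrt{(f_2+f^*)/2}}.
\]
Because $f^*\ge 0$, the monotonicity $\sqrt{f_i+f^*}\ge \sqrt{f_i}$ gives $\sqrt{(f_1+f^*)/2}+\sqrt{(f_2+f^*)/2} \ge \tfrac{1}{\sqrt{2}}(\sqrt{f_1}+\sqrt{f_2})$, so the ratio is bounded above by $1/\sqrt{2}$. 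Squaring yields the pointwise bound; integrating against $\lambda$ completes the proof.

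The whole argument is elementary, and the only mildly subtle step is establishing the pointwise bound $\sqrt{f_i+f^*}+\sqrt{f_j+f^*}\ge \tfrac{1}{\sqrt 2}(\sqrt{f_i}+\sqrt{f_j})$, which I expect to be the ``main obstacle'' in the sense that it is the one place the nonnegativity of $f^*$ is used in an essential way and where the constant $\sqrt{2}$ in the statement of the lemma enters. Everything else is algebraic manipulation of the Hellinger definition, so no measure-theoretic care beyond $\sigma$-finiteness of $\lambda$ is needed.
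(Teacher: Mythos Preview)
Your argument is correct and is essentially the standard pointwise proof of this inequality: the paper itself does not reproduce a proof but simply defers to Lemma~4.2 of \citet{m-estimation}, noting that replacing the true density by an arbitrary $f^*$ changes nothing, and that lemma is proved exactly via the conjugate-root identity and the monotonicity bound you used. The only minor point worth noting is the degenerate case $f_1=f_2$ (where $\sqrt{f_1}+\sqrt{f_2}$ may vanish), but there both sides are zero and the inequality is trivial.
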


This lemma is similar to lemma 4.2 in \citet{m-estimation}, with the only difference being that we consider an arbitrary $f^*$ and \citet{m-estimation} considers $f^*$ to be the true density. The proof remains unchanged.

\begin{corollary}
	Let $\bar{\mathcal{F}}^{1/2}_{m,k}(\delta)$ be as in theorem $\ref{thm:sieves-convrate}$, and
	\[
	\mathcal{F}_{m,k}^{1/2}(\delta) = \left\{ \sqrt{f}:\, f\in\mathcal{F}_{m,k},\; d_h(f,f^*)\le \delta\right\}.
	\]
	We have that $N_B(\varepsilon,\bar{\mathcal{F}}_{m,k}^{1/2}(\sqrt{2}\delta),\|\cdot\|_2) \le N_B(\varepsilon,\mathcal{F}^{1/2}_{m,k}(\delta),\|\cdot\|_2)$.
	\label{c:brk-hel}
\end{corollary}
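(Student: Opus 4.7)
The plan is to convert an $\varepsilon$-bracketing cover of $\mathcal{F}^{1/2}_{m,k}(\delta)$ into an $\varepsilon$-bracketing cover of $\bar{\mathcal{F}}^{1/2}_{m,k}(\sqrt{2}\delta)$ of no greater cardinality, via the pointwise map $\sqrt{f}\mapsto\sqrt{(f+f^*)/2}$. Given a minimal $\varepsilon$-bracketing cover $\{[l_i,u_i]\}_{i=1}^{N}$ of $\mathcal{F}^{1/2}_{m,k}(\delta)$ in $\|\cdot\|_{2,\Omega\times A}$, assume without loss of generality that $0\le l_i\le u_i$ (replacing $l_i$ by $\max(l_i,0)$ if necessary), and define
\[
\bar l_i(x,y)=\sqrt{\tfrac{l_i(x,y)^2+f^*(x,y)}{2}},\qquad \bar u_i(x,y)=\sqrt{\tfrac{u_i(x,y)^2+f^*(x,y)}{2}}.
\]
Monotonicity of $t\mapsto\sqrt{(t^2+f^*)/2}$ on $[0,\infty)$ then ensures that whenever $l_i\le\sqrt{f}\le u_i$ pointwise one automatically has $\bar l_i\le\sqrt{(f+f^*)/2}\le\bar u_i$, so $[\bar l_i,\bar u_i]$ is a legitimate bracket around $\sqrt{(f+f^*)/2}$.

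Next I would control the bracket width. Using the identity $\sqrt a-\sqrt b=(a-b)/(\sqrt a+\sqrt b)$ with $a=(u_i^2+f^*)/2$ and $b=(l_i^2+f^*)/2$, together with the elementary bounds $\sqrt{(u_i^2+f^*)/2}\ge u_i/\sqrt{2}$ and $\sqrt{(l_i^2+f^*)/2}\ge l_i/\sqrt{2}$ (which hold since $f^*\ge 0$), a direct calculation yields the pointwise estimate
\[
0\le \bar u_i(x,y)-\bar l_i(x,y)\le\frac{u_i(x,y)-l_i(x,y)}{\sqrt{2}},
\]
so that $\|\bar u_i-\bar l_i\|_{2}\le \varepsilon/\sqrt{2}\le\varepsilon$. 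This is the step where the contraction constant $1/\sqrt{2}$ of the averaging map on square-roots is exploited, and it is completely parallel to the pointwise Lipschitz bound \eqref{eq:lipschitz} used to build the cover of $\mathcal{F}^{1/2}_{m,k}$ in Lemma \ref{l:bracketing}.

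Finally I would verify that the transformed collection $\{[\bar l_i,\bar u_i]\}_{i=1}^{N}$ really covers all of $\bar{\mathcal{F}}^{1/2}_{m,k}(\sqrt{2}\delta)$. Pick any $\sqrt{(f+f^*)/2}\in\bar{\mathcal{F}}^{1/2}_{m,k}(\sqrt{2}\delta)$, so $f\in\mathcal{F}_{m,k}$ with $d_h((f+f^*)/2,f^*)\le\sqrt{2}\delta$. Lemma \ref{l:hel-lin} tells us the averaging map is a $1/\sqrt{2}$-contraction in Hellinger distance, so the factor $\sqrt{2}$ in the ball radius on the averaged side is exactly calibrated to the $\delta$-radius on the un-averaged side. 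Using the parameter-space covering construction behind Lemma \ref{l:bracketing}, this pins down a $\zeta$ in the same parameter neighbourhood of $\zeta^*$ as the one that generated $[l_i,u_i]$, so the original bracket $[l_i,u_i]$ sandwiches $\sqrt{f}$ and, by Step (i), the transformed bracket $[\bar l_i,\bar u_i]$ sandwiches $\sqrt{(f+f^*)/2}$.

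The main obstacle is this last coverage step, because Lemma \ref{l:hel-lin} only supplies the direction $\sqrt{2}\,d_h((f+f^*)/2,f^*)\le d_h(f,f^*)$, and the converse fails in general, so one has to be careful about how the two Hellinger balls line up. Routing the argument through the parameter-indexed Lipschitz bound \eqref{eq:lipschitz} (rather than through an un-available reverse Hellinger comparison) is what makes the $\sqrt{2}$ in the statement the exact compensating factor for the $1/\sqrt{2}$ width-contraction from Step (ii).
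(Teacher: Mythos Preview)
Your bracket construction and the width contraction are correct: the pointwise bound $0\le\bar u_i-\bar l_i\le(u_i-l_i)/\sqrt{2}$ follows exactly as you computed, and it is essentially the pointwise content of Lemma~\ref{l:hel-lin}. The genuine gap is the coverage step, and your proposed fix does not close it. For the transformed brackets $\{[\bar l_i,\bar u_i]\}$ to cover $\bar{\mathcal{F}}^{1/2}_{m,k}(\sqrt{2}\delta)$ you need every $f\in\mathcal{F}_{m,k}$ with $d_h((f+f^*)/2,f^*)\le\sqrt{2}\delta$ to satisfy $l_i\le\sqrt f\le u_i$ for some $i$; since the $[l_i,u_i]$ cover only $\mathcal{F}^{1/2}_{m,k}(\delta)$, this forces $d_h(f,f^*)\le\delta$. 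But Lemma~\ref{l:hel-lin} gives only the \emph{lower} bound $d_h(f,f^*)\ge\sqrt{2}\,d_h((f+f^*)/2,f^*)$, hence no upper bound on $d_h(f,f^*)$ whatsoever. You note this yourself and then appeal to the parameter Lipschitz bound~\eqref{eq:lipschitz}. That does not help: \eqref{eq:lipschitz} passes from parameter distance to $L^2$ distance of square roots, whereas placing $\zeta$ inside the $\delta$-neighbourhood of $\zeta^*$ requires the opposite direction, from a Hellinger bound on the \emph{averaged} density back to a parameter bound. Neither \eqref{eq:lipschitz} nor Lemma~\ref{l:hel-lin} supplies that, so the argument is circular.

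The paper's own proof is a one-line appeal to Lemma~\ref{l:hel-lin} with $f_1=f$, $f_2=f^*$, and does not spell out coverage either. What Lemma~\ref{l:hel-lin} together with your contraction step cleanly delivers is the \emph{unlocalized} inequality
\[
N_B\bigl(\varepsilon,\bar{\mathcal{F}}^{1/2}_{m,k}(\delta),\|\cdot\|_2\bigr)\le N_B\bigl(\varepsilon,\mathcal{F}^{1/2}_{m,k},\|\cdot\|_2\bigr),
\]
since dropping the $\delta$-restriction on the right makes coverage automatic. That is precisely what the proof of Theorem~\ref{thm:convrate} actually uses when it plugs in Lemma~\ref{l:bracketing}. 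The localized version in the corollary, with $\sqrt{2}\delta$ on the averaged side and $\delta$ on the unaveraged side, does not follow from the available contraction direction of Lemma~\ref{l:hel-lin} alone.
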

\begin{proof}
	The proof follows from lemma \ref{l:hel-lin}, taking $f_1 = f$ and $f_2=f^*=f^*$. We have that an $(\sqrt{2}\varepsilon)$-bracket net for $\bar{\mathcal{F}}_{m,k}^{1/2}(\sqrt{2}\delta)$ is also an $\varepsilon$-bracket net for $\mathcal{F}_{m,k}^{1/2}(\delta)$, all with respect to $\|\cdot\|_2$.
\end{proof}

The next lemma provides a bound on the bracketing number of functional classes that are Lipschitz in a parameter.

\begin{lemma}[Theorem 2.7.11 in \citet{weak-convergence}]
	Let $\mathcal{F} = \{f_t:\,t\in T\}$ be a class of functions satisfying
	\[
	|f_s(x) -f_t(x)| \le d(s,t)\,F(x),
	\]
	for some metric $d$ on $T$, function $F$ on the sample space and every $x$. Then for any norm $\|\cdot\|$, 
	\begin{equation}
		N_B(2\varepsilon\|F\|,\mathcal{F},\|\cdot\|) \le N(\varepsilon,T,d),
		\label{eq:covering}
	\end{equation}
	where $N(\varepsilon,T,d)$ is the$\varepsilon$-covering number of $T$ with respect to the metric $d$.
	\label{l:covering}
\end{lemma}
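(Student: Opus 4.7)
The plan is to construct explicit brackets for $\mathcal F$ directly from a covering of the parameter set $T$, using the Lipschitz hypothesis to translate $d$-balls in $T$ into $L^\infty(F)$-balls (and hence $\|\cdot\|$-brackets) in $\mathcal F$.

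First I would fix $\varepsilon>0$ and let $\{t_1,\ldots,t_N\}\subset T$ be a minimal $\varepsilon$-cover of $T$ under the metric $d$, so that $N=N(\varepsilon,T,d)$ and every $t\in T$ satisfies $d(t,t_i)\le\varepsilon$ for some $i$. For each $i$ I would form the pair of envelope functions
\[
\ell_i(x)=f_{t_i}(x)-\varepsilon F(x),\qquad u_i(x)=f_{t_i}(x)+\varepsilon F(x),
\]
and consider the brackets $[\ell_i,u_i]$, $i=1,\ldots,N$.

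Next I would verify that these brackets cover $\mathcal F$. For an arbitrary $t\in T$ pick $t_i$ with $d(t,t_i)\le\varepsilon$; the Lipschitz assumption $|f_{t}(x)-f_{t_i}(x)|\le d(t,t_i)F(x)\le\varepsilon F(x)$ yields pointwise $\ell_i(x)\le f_t(x)\le u_i(x)$, so $f_t\in[\ell_i,u_i]$. Thus the $N$ brackets $\{[\ell_i,u_i]\}_{i=1}^{N}$ cover $\mathcal F$.

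Finally I would bound the size of each bracket in the given norm: $\|u_i-\ell_i\|=\|2\varepsilon F\|=2\varepsilon\|F\|$. Therefore $\mathcal F$ admits a cover by at most $N(\varepsilon,T,d)$ brackets each of $\|\cdot\|$-size $2\varepsilon\|F\|$, which is exactly the inequality $N_B(2\varepsilon\|F\|,\mathcal F,\|\cdot\|)\le N(\varepsilon,T,d)$. There is no real obstacle here: the argument is a direct translation via the Lipschitz inequality, and the only subtlety to be careful about is that the brackets are defined pointwise in $x$ (so that the bracketing relation $\ell_i\le f_t\le u_i$ holds everywhere), while the bracket size is measured in the chosen norm $\|\cdot\|$; both aspects are handled transparently because $F$ appears uniformly on the right-hand side of the Lipschitz bound.
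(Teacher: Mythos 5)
Your proof is correct and is precisely the standard argument behind Theorem 2.7.11 of \citet{weak-convergence}, which the paper itself states without proof: an $\varepsilon$-net $\{t_1,\dots,t_N\}$ of $(T,d)$ yields the brackets $[f_{t_i}-\varepsilon F,\, f_{t_i}+\varepsilon F]$, which cover $\mathcal{F}$ pointwise by the Lipschitz hypothesis and have $\|\cdot\|$-size $2\varepsilon\|F\|$ by homogeneity of the norm. Nothing is missing, so there is nothing to compare beyond noting that your argument coincides with the one in the cited reference.
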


It is straightforward to see that if we set $\dim(T) = d$, $c_T = \diam(T)$, and $C = 4\|F\|_2$, 
\begin{equation*}
	N_B(\varepsilon,\mathcal{F},\|\cdot\|_2) \le \left(\frac{c_TC}{\varepsilon}  \right)^{d}
	\label{}
\end{equation*}

\end{document}